\definecolor{darkblue}{RGB}{0,0,160}
\newcommand{\RR}{\mathbb{R}}
\newcommand{\NN}{\mathbb{N}}
\DeclareMathOperator*{\argmax}{\operatorname{arg max}}
\newcommand{\dla}{\langle\!\langle}
\newcommand{\dra}{\rangle\!\rangle}
\newcommand{\B}{\mathbf{B}}
\newcommand{\Q}{\mathbf{Q}}
\newcommand{\bs}{\boldsymbol}
\newtheorem{theorem}{Theorem}
\newtheorem{lemma}[theorem]{Lemma}
\newtheorem{cor}[theorem]{Corollary}
\newtheorem{prop}[theorem]{Proposition}
\newtheorem{conj}[theorem]{Conjecture}
\theoremstyle{definition}
\newtheorem{example}[theorem]{Example}
\newtheorem{remark}[theorem]{Remark}
\newtheorem{defn}[theorem]{Definition}
\newtheorem{algorithm}{Algorithm}
\begin{document}
	\title{Optimal designs for discrete choice models via graph Laplacians}
\author[F.~Röttger]{Frank Röttger}
\address[]{TU Eindhoven\\5600 MB Eindhoven \\The Netherlands}
	\email{f.rottger@tue.nl}
\urladdr{\url{https://sites.google.com/view/roettger/}}
\author[T.~Kahle]{Thomas Kahle}
\address[]{Fakultät für Mathematik\\Otto-von-Guericke Universität
	Magdeburg\\39106 Magdeburg\\Germany}
\email{thomas.kahle@ovgu.de}
\urladdr{\url{https://www.thomas-kahle.de}}
\author[R.~Schwabe]{Rainer Schwabe}
\address[]{Fakultät für Mathematik\\Otto-von-Guericke Universität
	Magdeburg\\39106 Magdeburg\\Germany}
\email{rainer.schwabe@ovgu.de}
\urladdr{\url{https://www.imst3.ovgu.de}}
\date{\today}
\keywords{Discrete choice experiment, Bradley--Terry paired comparison model, $D$-optimality,
  Laplacian matrix, Farris transform, Gaussian graphical model}

\makeatletter
\@namedef{subjclassname@2020}{\textup{2020} Mathematics Subject Classification}
\makeatother
\subjclass[2020]{Primary: 62K05; Secondary: 62H22, 62R01, 90C25}


\begin{abstract}
	In discrete choice experiments, the information matrix depends on the model parameters. 
Therefore designing optimally informative experiments for arbitrary initial parameters often yields highly nonlinear optimization problems and makes optimal design infeasible.  To overcome such challenges, we connect design theory for discrete choice experiments with Laplacian matrices of undirected graphs, resulting in complexity reduction and feasibility of optimal design.
We rewrite the $ D $-optimality criterion in terms of Laplacians via Kirchhoff's matrix tree theorem, and show that its dual has a simple description via the Cayley--Menger determinant of the Farris transform of the Laplacian matrix.
This results in a drastic reduction of complexity and allows us to implement a gradient descent algorithm to find locally $ D $-optimal designs.
For the subclass of Bradley--Terry paired comparison models, we find a direct link to maximum likelihood estimation for Laplacian-constrained Gaussian graphical models. 
Finally, we study the performance of our algorithm and demonstrate its application to real and simulated data.
\end{abstract}
\maketitle

\section{Introduction}
Since their Nobel-laureated introduction by \citet{McF74}, discrete choice experiments have enjoyed tremendous success in many applications, for example economics, psychology, public health or transportation.
In this paper we consider discrete choice models with $ m $ unstructured alternatives, i.e.~the alternatives are considered as $m$ categories of a single factor like in a univariate one-way layout.  
A choice set of size $ k $ is a collection of $ k $ out of $ m $ mutually different alternatives which corresponds to blocks in a one-way layout. 
We assume that each alternative from a choice set has a (latent) random utility. 
The consumer then decides in favor of that alternative which has the highest utility within a choice set. 
Hence, in contrast to the standard situation of a one-way layout, the utilities cannot be observed directly, but only which alternative has the highest utility within a choice set.
This results in choice probabilities which depend on the mean utilities.
Thereby the model can be reformulated as a multinomial regression model.
The model with $ k=2 $ alternatives per choice set is known as the Bradley--Terry paired comparison model \citep{BradleyTerry}, which had already been introduced by \citet{zermelo29} to estimate the playing strength of chess players in tournaments.

The quality of the outcome of a discrete choice experiment strongly depends on the assignment of the alternatives among different choice sets, that is the experimental design.
In this paper our interest is in $ D $-optimal designs for such models, that is designs which maximize the determinant of the information matrix of the experiment.
Due to the inherent nonlinearity of multinomial regression, $ D $-optimal designs are only locally optimal with respect to the parameter of the model \citep{Chernoff}.
This parameter dependence makes the optimization problem much more complicated, for example in comparison with optimal designs for linear models, where there is no such parameter dependence.
Therefore, finding $D$-optimal designs given arbitrary parameters in generalized linear models was considered impossible in practice \citep{Dette1996}.
As a consequence, previous research is limited to very restricted models.
For example, \citet{Grasshoff2008} and \citet{KRS2021} study $D$-optimal designs for the special setting of Bradley--Terry paired comparison models with only $ m\in\{3,4\} $ alternatives, and describe the geometry of saturated designs, that is designs with minimal support.

An alternative approach to ranking is owner assisted scoring,
for example as described by \citet{SuScoring} or
\citet{frongillo2021general}.  This however works under different
premises.  There an owner of the items knows the true utilities and
should be incentivised to improve upon a noisy observation of
utilities.  Discrete choice models work under the premise that the
utilities are not directly observable, but only (partial) rankings can
ever be observed.

As our mathematical contribution, we connect design theory
for discrete choice experiments with Laplacian matrices and
Laplacian-constrained Gaussian graphical models.
The Laplacian matrix
of a weighted graph is the difference of a diagonal degree matrix and
the edge weight matrix (see Section~\ref{s:preliminaries} for the
formal definition).  A Laplacian-constrained Gaussian graphical model
is defined as a multivariate Gaussian with a Laplacian matrix as
inverse covariance.  These models are an active research area in
machine learning, see e.g.~\citet{EPO2017}, \citet{KYVP2019} or
\citet{YCP2021}, and multivariate extremes~\citep{REZ21}.  Laplacian
matrices are very relevant in combinatorics and graph theory and
connect geometric, graphical and algebraic properties, see for example
\citet{devriendt2020} for a concise introduction.

The relevance of Laplacian matrices for design theory originates in a reformulation of the $ D $-optimality criterion via Kirchhoff's matrix tree theorem.
Similar connections between Laplacian matrices and optimal designs are well known in the design literature, see for example \cite{BC2009} for a survey article.
Each Laplacian matrix is one-to-one with a variogram or Euclidean distance matrix $ \Gamma $.
Our main result is a dual description of the $ D $-criterion as maximizing the logarithmic Cayley--Menger determinant
\begin{align}
	\argmax_{\Gamma}\quad	\log \det \begin{pmatrix}
		0&-\mathbf{1}^T\\
		\mathbf{1}&-\frac{\Gamma}{2}\\
	\end{pmatrix}\quad \text{subject to } A\vec{\Gamma}\le (m-1)\bs 1\label{eq:CM}
\end{align}
under simple linear inequality constraints defined by a parameter-dependent $ \binom{m}{k}\times\binom{m}{2}$-matrix~$A$.  See Proposition~\ref{prop:dual} and Theorem~\ref{thm:KKT-conditions} for details and notation.
There are two simplifications in this formulation.  Both are very relevant for high-dimensional problems and yield simple algorithms to find $ D $-optimal designs:
\begin{enumerate}
	\item[1.] While the original optimization problem is defined via $ \binom{m}{k} $ variables, the dual problem is written in $ \binom{m}{2} $ variables. This results in a drastic complexity reduction for $ k>2 $.
	\item[2.] The inequalities $A\vec{\Gamma} \le (m-1)\bs 1$ are sparse.  Each inequality has $\binom{k}{2}$ terms and thus the system allows for efficient algorithmic handling.
\end{enumerate}

The optimization problem \eqref{eq:CM} has appealing properties for high-dimensional settings, as we show by a connection to maximum likelihood estimation for Laplacian-constrained Gaussian graphical models.  In the case of the Bradley--Terry paired comparisons this link even allows for symbolic solutions
for $ D $-optimal designs
(Theorem~\ref{thm:decomposable}).
This follows because the maximum likelihood estimator of a decomposable Laplacian-constrained Gaussian graphical model has a rational description as a matrix completion problem.
We further illustrate Theorem~\ref{thm:decomposable} with examples in
the appendix.

The particularly nice structure of \eqref{eq:CM} allows for a gradient descent algorithm that we introduce in Section~\ref{sec:alg}. 
	We study the algorithms performance and observe in simulations that it finds the solution of \eqref{eq:CM} very quickly and with good precision.
We further apply our algorithm to two data sets from the package \texttt{hyper2} \citep{Hankin2017} and simulated data in Section~\ref{sec:applications}.
Our new methodology allows us to quickly compute a $ D $-optimal design with respect to the estimated parameter and to evaluate the $ D $-efficiency of the study designs.
We find that in both real data sets, the study designs have a lower $ D $-efficiency than the complete design with constant design weights.
Furthermore, we observe that given the comparably similar choice probabilities, both the study designs and the complete designs with constant design weights exhibit a measurable decrease in $ D $-efficiency.
In a simulation study, we sample parameters from a log-normal distribution for growing standard deviation.
It shows that on average both the $ D $-efficiency of the complete design with constant weights and the support size of the $ D $-optimal design decrease.
Finally, we study the $ D $-efficiency of the complete design with constant design weights and two complementary balanced incomplete block designs on a line in parameter space.
All \texttt{R}- and \texttt{Mathematica} code for the simulations, applications as well as intricate matrix operations is available as a GitHub repository:
{\small \url{https://github.com/frank-unige/discrete_choice_designs_via_graph_Laplacians}}

\section{Preliminaries}\label{s:preliminaries}
\subsection{Model}
We study discrete choice models for $ m $ unstructured alternatives. 
A choice set $ C_j\subset [m]:=\{1,\ldots,m\} $ is a subset of alternatives.
There are $ \binom{m}{k} $ different choice sets with exactly $k$ alternatives, such that $ j=1,\ldots,\binom{m}{k} $.
We assume that all choice sets in an experiment consist of $k$ alternatives.
For each choice set $ C_j $, let $ Y(C_j) $ be a $ k $-variate random vector consisting of binary components $ Y(i,C_j) $ for $ i\in C_j $.
Here, $ Y(i,C_j)=1 $ means that $ i $ is the preferred alternative in choice set $ C_j $ and $ Y(i,C_j)=0 $ that this is not the case.
It holds that $\sum_{i\in C_j}Y(i,C_j)=1$, such that $Y(C_j)$ follows a multinomial distribution.
For each $i$, we have a parameter $ \pi_i>0,~i\in [m] $, the inherent, latent attractiveness (mean utility) of alternative~$i$.
The model is specified by
\begin{align}
	\mathbb{P}(Y(i,C_j)=1)&=\frac{\pi_i}{\sum_{s\in C_j}\pi_s}.\label{eq:model}
\end{align}
For $ k=2 $ one has only paired comparisons and this is the Bradley--Terry model, see Section~\ref{sec:BradleyTerry}.
The vector $ Y(C_j)=(Y(i,C_j))_{i\in C_j} $ has a multinomial distribution with success probabilities~\eqref{eq:model}.
The parameters $ \pi $ are not identifiable, as multiplying the vector $ \pi $ with a scalar factor does not change the probabilities in~\eqref{eq:model}.
Therefore, a standard approach is to reduce the model to $ m-1 $ parameters, for example by fixing~$ \pi_m $.
As this renders the remaining parameters identifiable, such a reduction is called an identifiability condition (see Section~\ref{sec:identifiability}).
After a logarithmic transformation $ \beta_{i}=\log(\pi_i) $, the probability \eqref{eq:model} becomes
\begin{align}
	\mathbb{P}(Y(i,C_j)=1)&=\frac{\exp(\beta_i)}{\sum_{s\in C_j}\exp(\beta_s)}. \label{eq:model_beta}
\end{align}
This transformation allows for a reformulation as a generalized linear model, where the mean response $ \mathbb{E}(Y(i,C_j)) $ is linked to the linear predictor $ f(i)^T\beta = \beta_i $ with parameter vector $ \beta\in \RR^{m} $ and regression vectors $ f\colon[m]\to \RR^m,\, f(i)=e_i $, where $ e_i $ denotes the $ i $-th canonical unit vector, by a logit transformation.
Hence, we have a multinomial regression model
\begin{equation*}
	\mathbb{E}(Y(i,C_j))=\mathbb{P}(Y(i,C_j)=1) = 
	\frac{\exp\left(f(i)^T\beta\right)}
	{\sum_{s\in C_j}\exp\left(f(s)^T\beta\right)}.
\end{equation*}

\subsection{Graph Laplacians and the Farris transform}
We give a short introduction to graph Laplacians and the Farris transform. For an extensive treatment see e.g.~\citet{devriendt2020}.
Let $ G=(V,E) $ be a simple undirected graph with vertex set $ V=\{1,\ldots,m\} $ and edge set $ E\subseteq V\times V $.
Let $ W_{uv} $ denote positive edge weights on $ G $, associated with each edge $ uv\in E $, where $uv$ or $u,v$ denotes an edge formed by vertices $u$ and $v$. We say that non-edges have weight $ 0 $.
The Laplacian matrix $ L $ of a weighted graph is defined as follows: 
\begin{align*}
	L_{uv}&=\begin{cases}
		-W_{uv},\quad u\neq v \\
		\sum_{\ell\in \text{adj}(u)} W_{u\ell},\quad u=v,\\ 
	\end{cases}
\end{align*}
where $ \text{adj}(u) $ denotes the set of adjacent vertices of $u$.
Then $L$ is a symmetric matrix with non-positive non-diagonal entries,
a symmetric Z-matrix.  Additionally the row sums are zero.
\begin{example}
	Given the weighted graph in Figure~\ref{fig:graph_ex} with positive edge weights $ W_{uv},\; uv\in E $.
	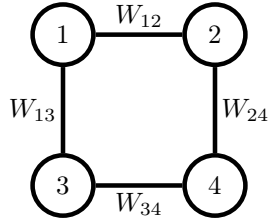
\begin{figure}\centering \begin{tikzpicture}[line width=.6mm]
			\node[minimum size=8mm,shape=circle,draw=black] (1) at (-1,1) {1};
			\node[minimum size=8mm,shape=circle,draw=black] (2) at (1,1) {2};
			\node[minimum size=8mm,shape=circle,draw=black] (3) at (-1,-1) {3};
			\node[minimum size=8mm,shape=circle,draw=black] (4) at (1,-1) {4};
			\path (1) edge (2);
			\path (1) edge (3);
			\path (2) edge (4);
			\path (3) edge (4); 
			\node (5) at (0,1.25) {$W_{12}$};
			\node (6) at (-1.4,0) {$W_{13}$};
			\node (7) at (1.4,0) {$ W_{24} $};
			\node (8) at (0,-1.25) {$W_{34}$};
		\end{tikzpicture}
		\caption{Weighted graph}\label{fig:graph_ex}
	\end{figure}
	The Laplacian matrix of this graph equals
	\[	L= \begin{pmatrix}
		W_{12}+W_{13}&-W_{12}&-W_{13}&0\\
		-W_{12}&W_{12}+W_{24}&0&-W_{24}\\
		-W_{13}&0&W_{13}+W_{34}&-W_{34}\\
		0&-W_{24}&-W_{34}&W_{24}+W_{34}\\
	\end{pmatrix}. \] 
	Note that $ L\mathbf{1}=\mathbf{0} $, i.e.~that the row- and column-sums vanish.
\end{example}

\begin{remark}
	For unweighted graphs, the combinatorial Laplacian matrix is the difference of the diagonal degree matrix $ D $ where $ D_{uu}=|\text{adj}(u)| $ and the adjacency matrix. This is equivalent to $ W_{uv}=1 $ for all $ uv\in E $ in the definition above.
\end{remark}
Let $ \mathbb{S}^{m-1} $ be the set of symmetric $ (m-1)\times (m-1) $-matrices.
The Farris transform of a matrix $ A\in \mathbb{S}^{m-1} $ is a linear transformation resulting in the $m\times m$ matrix $\Gamma$ with entries
\begin{align*}
	\begin{cases}
		\Gamma_{uv}= A_{uu}+A_{vv}-2A_{uv}, & u,v<m,\\
		\Gamma_{um}=\Gamma_{mu}= A_{uu}, & u<m,\\
		\Gamma_{mm}=0. &\\
	\end{cases}
\end{align*}
The matrix $ \Gamma$ lies in $ \mathbb{S}_0^m $, the set of symmetric $ m\times m $ matrices with zero diagonal.
Furthermore, the matrix $ A $ is positive definite if an only if $ \Gamma\in \mathcal{C}^m $, where $ \mathcal{C}^m\subset \mathbb{S}_0^m $ is the cone of conditionally negative definite symmetric $ m\times m $ matrices \citep{EH2020}.
The inverse Farris transform reconstructs the entries of $A$ via
\begin{align}
	A_{uv}= \frac{1}{2}(\Gamma_{um}+\Gamma_{vm}-\Gamma_{uv}). \label{eq:invFarris}
\end{align}
Let $ \mathbb{S}_{\ge}^m $ be the cone of symmetric positive semidefinite $ m\times m $ matrices.
We call $ \mathbb{U}^{m}:=\{B\in \mathbb{S}_{\ge}^m : B\mathbf{1}=0\} $ the set of symmetric $ m\times m $ matrices with row sums equal to zero. 
Let $ \Theta\in\mathbb{U}^{m} $. 
As the diagonal entries of $ \Theta $ equal the negative of the sum of the respective non-diagonal entries in each row and column, the entries of $ \Theta $ are uniquely characterized by any $ (m-1)\times (m-1) $ principal submatrix $ \Theta^{(k)} $, resulting from deleting the $ k $-th row and column of~$\Theta$.
We obtain a bijection between $ \mathcal{C}^m $ and $ \mathbb{U}^{m} $ via the Farris transform of the inverse of~$ \Theta^{(k)} $.

Let $ Q_{uv}=-\Theta_{uv} $ for all $ u\neq v $ and $ Q_{uu}=0 $ for $ 1\le u \le d $. When $ Q_{uv}\ge 0 $, then $ \Theta $ is a graph Laplacian.
Then $ Q \in \mathbb{S}_0^m $ is a symmetric matrix with zero diagonal.
The Farris transform relates to inner products as follows.
Let $\langle A, B \rangle = \text{tr}(AB)$ be the standard trace inner product on $\mathbb{S}^{m-1}$ and $ \dla \Gamma,Q \dra:=\sum_{s<t}\Gamma_{st}Q_{st} $ the vector  inner product on $ \mathbb{S}_0^m $.
For arbitrary $ A,\Theta^{(k)}\in\mathbb{S}^{m-1} $ and $\Gamma$ the Farris transform of $A$, it holds that
\begin{align}
	\langle A,\Theta^{(k)} \rangle = \dla \Gamma, Q \dra.\label{eq:inner_product}
\end{align}

\subsection{Information matrix}\label{sec:informationM}
Let $ C_j\subset V $ be a choice set of size $ k $. We define
$ \Lambda_j(\pi) $ as the Laplacian matrix of the complete graph
$ \mathcal{K}_{C_j} $ on the vertex set $ C_j $ with edge weights
$\pi_s\pi_t(\sum_{i\in C_j}\pi_i)^{-2} $ for $ s,t\in C_j $.  Under the assumption of the model \eqref{eq:model}, the $ k\times k $ covariance matrix of $ Y(C_j) $ equals
$\text{Cov}(Y(C_j))=\Lambda_j(\pi)$ (as noted by \citet{GGHS2013}).
It follows that the information matrix for one observation
of $ Y(C_j) $ computes as
\[M(C_j,\pi)=F(C_j)^T \text{Cov}(Y(C_j))F(C_j)=F(C_j)^T\Lambda_j(\pi) F(C_j), \]
where $ F(C_j)=(f(i))_{i\in C_j}^T $ denotes the $ k\times m $ design matrix of $ C_j $ \citep[Eq.~(22.3)]{GS2015}.
Then, $ M(C_j,\pi) $ is the Laplacian matrix of the graph $ (V,E(\mathcal{K}_{C_j})) $ resulting from adding the nodes $ V\setminus C_j $ (but no edges) to $ \mathcal{K}_{C_j} $.

\subsection{Design}
An experimental design for the model \eqref{eq:model} assigns proportions of the total number of observations $ N $ to the different choice sets.
These proportions are called \textit{design weights}.
The design is encoded as a $ \binom{m}{k} $-dimensional vector $ \xi $ with non-negative entries that sum up to one.
To simplify computations, one allows for the design weights to live in $ \RR $ instead of $ \NN/N $.
In such a case, in the spirit of~\citet{Kiefer1959,Kief74}, one speaks of an \textit{approximate} design.
We denote the set of all approximate designs for $ \binom{m}{k} $ choice sets as
\[\Delta_{\binom{m}{k}}:=\{\xi \in \RR^{\binom{m}{k}}_{\ge 0} : \Vert \xi \Vert_1=1 \}. \]
Let $ \xi=(w_1,\ldots,w_{\binom{m}{k}}) $ be an approximate design, where $w_j$ is the weight of choice set $C_j$. 
Assuming independent observations, we define the information matrix of $ \xi $ as the convex combination of the choice set information matrices:
\[M(\xi,\pi)=\sum_{j=1}^{\binom{m}{k}}w_j M(C_j,\pi). \]
If the design weights are in $ \NN/N $, the matrix $ N\cdot M(\xi,\pi) $ equals the classical information matrix of $ N $ independent observations taken according to~$\xi$.
As a convex combination of graph Laplacians, the information matrix $ M(\xi,\pi) $ is itself a graph Laplacian.
The edge weights are $ \pi_s\pi_t\sum_{j:s,t\in C_j}w_j/(\sum_{i\in C_j}\pi_i)^{2} $, see also \citet[p.~146]{SD2016}, thus
\begin{align}
	M_{st}(\xi,\beta)&=\begin{cases}
		-\pi_s\pi_t\sum_{j:s,t\in C_j}\frac{w_j}{(\sum_{i\in C_j}\pi_i)^{2}}, & s\neq t,\\
		\sum_{u\neq s}\pi_s\pi_u\sum_{j:s,u\in C_j}\frac{w_j}{(\sum_{i\in C_j}\pi_i)^{2}}, & s=t.\\
	\end{cases}\label{eq:gen_information}
\end{align}

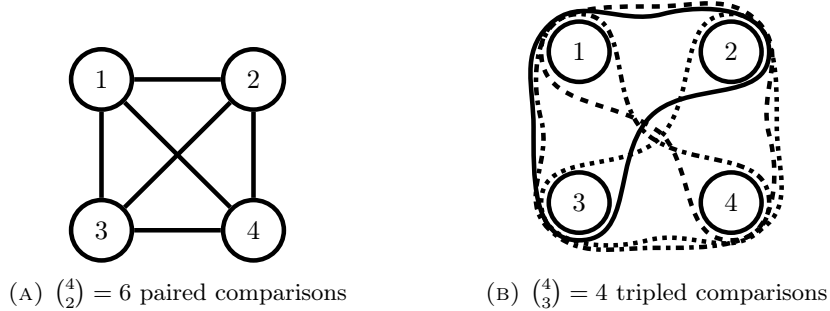
\begin{figure}\centering
	\begin{subfigure}[b]{0.4\textwidth}
		\centering
		\begin{tikzpicture}[line width=.6mm]
			\node[minimum size=8mm,shape=circle,draw=black] (1) at (-1,1) {1};
			\node[minimum size=8mm,shape=circle,draw=black] (2) at (1,1) {2};
			\node[minimum size=8mm,shape=circle,draw=black] (3) at (-1,-1) {3};
			\node[minimum size=8mm,shape=circle,draw=black] (4) at (1,-1) {4};
			\path (1) edge (2);
			\path (1) edge (3);
			\path (1) edge (4);
			\path (2) edge (3);
			\path (2) edge (4);
			\path (3) edge (4); 
		\end{tikzpicture}
		\caption{$ \binom{4}{2} =6 $ paired comparisons}
	\end{subfigure}
	\begin{subfigure}[b]{0.4\textwidth}
		\centering
		\begin{tikzpicture}[line width=.6mm]
			\node[minimum size=8mm,shape=circle,draw=black] (1) at (-1,1) {1};
			\node[minimum size=8mm,shape=circle,draw=black] (2) at (1,1) {2};
			\node[minimum size=8mm,shape=circle,draw=black] (3) at (-1,-1) {3};
			\node[minimum size=8mm,shape=circle,draw=black] (4) at (1,-1) {4};
			\begin{scope}
				\draw  ($(1) + (-0.4, 0.35)$)
				to [out = 45, in = 195] ($(0.1, 1.5)$)
				to [out = 15, in = 90] ($(2) + (0.5, 0)$)
				to [out = 270, in = 50] ($(-0.1, 0.1)$)
				to [out = 230, in = 0] ($(3) + (0, -0.5)$)
				to [out = 180, in = 270] ($(-1.6,0)$)
				to [out = 90, in = 225] ($(1) + (-0.4, 0.35)$);
			\end{scope}
			\begin{scope}
				\draw [dashed] ($(2) + (0.35,0.4)$)
				to [out = 315, in = 105] ($(1.5,-0.1)$)
				to [out = 285, in = 0] ($(4) + (0, -0.5)$)
				to [out = 180, in = 310] ($(0.1, -0.1)$)
				to [out = 140, in = 270] ($(1) + (-0.5,0)$)
				to [out = 90, in = 180] ($(0, 1.6)$)
				to [out = 0, in = 135] ($(2) + (0.35,0.4)$);
			\end{scope}
			\begin{scope}
				\draw [dash dot] ($(3) + (-0.4, -0.35)$)
				to [out = 135, in = 285] ($(-1.5,0.1)$)
				to [out = 105, in = 180] ($(1) + (0, 0.5)$)
				to [out = 0, in = 140] ($(-0.1, -0.1)$)
				to [out = 320, in = 90] ($(4) + (0.5,0)$)
				to [out = 270, in = 0] ($(0,-1.6)$)
				to [out = 180, in = 315] ($(3) + (-0.4, -0.35)$);
			\end{scope}
			\begin{scope}
				\draw[dotted] ($(4) - (-0.4, 0.35)$)
				to [out = 225, in = 15] ($(-0.1, -1.5)$)
				to [out = 195, in = 270] ($(3) - (0.5, 0)$)
				to [out = 90, in = 230] ($(0.1, -0.1)$)
				to [out = 50, in = 180] ($(2) - (0, -0.5)$)
				to [out = 0, in = 90] ($(1.6,0)$)
				to [out = 270, in = 35] ($(4) - (-0.4, 0.35)$);
			\end{scope}
		\end{tikzpicture}
		\caption{$ \binom{4}{3} =4 $ tripled comparisons}
	\end{subfigure}
	\caption{Complete hypergraphs for $ m=4 $ and $ k\in \{2,3\} $}\label{fig:graphs}
\end{figure}
\begin{remark}
	A design $ \xi \in \Delta_{\binom{m}{k}} $ also has a representation as a weighted $ k $-uniform hypergraph $ G=(V,H) $. Here, the vertices $ V=[m] $ are the alternatives, and the hyperedges $ H={C_1,\ldots,C_{\binom{m}{k}}} $ are the choice sets.
	The hyperedges are weighted with the corresponding design weights, where a zero weight stands for a non-hyperedge.
	Information matrices become weighted analogues of the hypergraph Laplacian matrices of~\citet{Rod2009}.
	In the Bradley--Terry model, the hypergraphs are ordinary graphs and the design defines the same undirected graph as the information matrix \citep{KRS2021}.
	Small examples for $ k=2 $ and $ k=3 $ are shown in Figure~\ref{fig:graphs}.
\end{remark}

\subsection{Identifiability}\label{sec:identifiability}
Since the information matrices $ M(C_j,\pi) $ and $ M(\xi,\pi) $ are singular (as they are graph Laplacians), the model \eqref{eq:model} is not a-priori identifiable, but it becomes identifiable after fixing one parameter.
For example, we set $ \pi_m=1 $, such that $ \beta_m=0 $.
We write $ \pi_0=(\pi_1/\pi_m,\ldots,\pi_{m-1}/\pi_m)\in \RR_{\ge 0}^{m-1} $ for the parameter of the reduced model where we divide $ \pi $ by~$ \pi_m $.
Then $ M(\xi,\pi)=M(\xi,(\pi_0,1)) $, as normalizing with $ \pi_m=1 $ neither changes \eqref{eq:model}, nor the information matrix \eqref{eq:gen_information}.

A standard approach for discrete choice models uses these identifiability constraints.
For example, in \citet{KRS2021}, it is assumed that $ \pi_m=1 $ and an $ (m-1)\times (m-1) $ information matrix is computed in this reduced model.
This information matrix equals $M^{(m)}(\xi,\pi_0)$, which is obtained from $ M(\xi,\pi) $ by deleting the $m$-th row and column, i.e.
\[M_{st}^{(m)}(\xi,\pi_0)=M_{st}(\xi,\pi),\quad s,t< m.\]
On the other hand, $ M(\xi,\pi) $ is recovered from $ M^{(m)}(\xi,\pi_0) $ by
\begin{align}
	M_{st}(\xi,\pi)&=\begin{cases}
		M_{st}^{(m)}(\xi,\pi_0), &\text{ for } s,t\neq m,\\
		-\sum_{u=1}^{m-1} M_{su}^{(m)}(\xi,\pi_0), &\text{ for } s\neq m, t=m,\\
		\sum_{u=1}^{m-1}	\sum_{\ell=1}^{m-1} M_{u\ell}^{(m)}(\xi,\pi_0), &\text{ for } s=t=m.\\
	\end{cases}\label{eq:information_trafo}
\end{align}

\subsection{Optimal designs}		
The asymptotic covariance of the maximum likelihood estimator for identifiable $\beta$ in generalized linear models is proportional to the inverse of the information matrix \citep{FK1985}. This is the central reason why design theory for generalized linear models aims at maximizing the information of an experiment.
Hereby, one usually chooses a function that maps the information to the real line.
These functions are known as \textit{optimality criteria}.
But unlike in linear models, the information for generalized linear models depends on the parameter~$\pi$.
This means that optimality is local in the parameter space.
Among the most popular criteria is the $ D $-criterion that maximizes the logarithmic determinant of the information:

\begin{defn}
	Consider the criterion $ \phi\colon \RR^{(m-1)\times(m-1)}\to \RR, \phi(\bar{M})=\log \det (\bar{M}) $.   A design $ \xi^* $ is locally $ D $-optimal at parameter $ \pi_0 $ when $ \phi(M^{(m)}(\xi^*,\pi_0))\ge \phi(M^{(m)}(\xi,\pi_0))  $ 
	for all $ \xi \in \Delta_{\binom{m}{k}} $.
\end{defn}
A locally $ D $-optimal design at parameter $ \pi_0 $ is therefore an optimum as follows:
\begin{align}\label{eq:D_optim_problem}
	\xi^*&=\argmax_{\xi\in \Delta_{\binom{m}{k}}}\, \phi(M^{(m)}(\xi,\pi_0)).
\end{align}
This is an optimization problem for which the target function depends on $\pi_0$.
As the information matrix is linear in $\xi $, the function $ \phi $ is concave and the set $\Delta_{\binom{m}{k}}$ is convex, this is a convex optimization problem.
Thus each local optimum (as a function of $\xi$, given $\pi_{0}$) is global.
While each design $\xi\in \Delta_{\binom{m}{k}}$ defines a unique point in the information matrix polytope \[
\bar{\mathcal{M}}:=\text{convhull}\left[M^{(m)}(C_j,\pi_0),1\le j\le \binom{m}{k}\right],
\]
the converse is not true.  
One could therefore also view \eqref{eq:D_optim_problem} as a two-stage problem.
First optimize $\phi$ over the polytope $\bar{\mathcal{M}}$, yielding an optimal information matrix.
This optimal matrix may typically have many expressions as a convex combination of the vertices of $\bar{\mathcal{M}}$ and expressing it as such is picking an optimal design~$\xi^*$.

\section{Optimal designs for discrete choice models}
In this section we rephrase the optimization problem \eqref{eq:D_optim_problem} via graph Laplacians and the Farris transform and find a simple dual of the rephrased problem.
To improve readability, we simplify the notation in this section by leaving out the parameter~$\pi$, though everything is local.

By Section~\ref{sec:informationM}, the information matrix $ M(\xi) $ of a discrete choice design $ \xi=(w_1,\ldots,w_{\binom{m}{k}}) $ is the Laplacian matrix of a graph $ G $ with edge weights 
\begin{align}
	Q_{uv}(\xi)&:=\pi_{u}\pi_{v}\sum_{j:uv\in C_j}\frac{w_j}{(\sum_{i\in C_j}\pi_i )^2}. \label{eq:edge_weights}
\end{align}
By Kirchhoff's matrix tree theorem, we can write the determinant of the reduced information matrix in terms of the graph weights, that is
\begin{align*}
	\det(M^{(m)}(\xi))&=\sum_{T\in\mathcal{T}}\prod_{uv\in T}Q_{uv}(\xi),
\end{align*}
where $ \mathcal{T} $ is the set of all spanning trees of $ G $, see e.g.~\citet[Lemma~4.4]{REZ21}.
As a consequence, the $ D $-criterion rewrites as
\begin{align}
	\text{minimize} \quad& -\log \sum_{T\in\mathcal{T}}\prod_{uv\in T}Q_{uv}(\xi),\quad \text{subject to }\; \xi\in \Delta_{\binom{m}{k}}.\label{eq:opt_problem_DC}
\end{align}
We rewrite $ Q(\xi) $ from \eqref{eq:edge_weights} as a vector $\vec{\Q}(\xi)=(Q_{12}(\xi),Q_{13}(\xi),\ldots,Q_{\binom{m}{2}-1,\binom{m}{2}}(\xi))$.
For this, we use the lexicographic ordering $ (12,13,\ldots,(m-1)m) $ to transform a matrix from $ \mathbb{S}_0^{m} $ to a $ \binom{m}{2} $-variate vector.
Similarly, let $ \vec{\Gamma}(\xi) $ denote the vectorization of $ \Gamma(\xi) $.
The following definition introduces edge-hyperedge incidence vectors and matrices which encode which edges are contained in a hyperedge, or a collection of hyperedges:
\begin{defn}
	For each choice set $ C_j $ let $ s_j $ denote its incidence vector in the space of the edges of the underlying $ m $-simplex, such that the $ uv $-th entry of $ s_j $, where $ s_j $ is indexed in lexicographic ordering, equals
	\[s_{j,uv}=\begin{cases}
		1, & u,v \in C_j,\\
		0, & \text{otherwise}.
	\end{cases}\]
	Let $S$ be the matrix whose rows are the incidence vectors, i.e.~$S^T:=\left(s_1,\ldots,s_{\binom{m}{k}}\right)$.
	Let $ \text{supp}(\xi)=\{C_j:w_j>0\}  $ denote the set of choice sets with non-zero design weight.
	We define $ S(\xi) $ as the edge incidence matrix of all choice sets in the support of $ \xi $, so that $ S(\xi) $ is a $ \vert\text{supp}(\xi)\vert\times \binom{m}{2} $-submatrix of $ S $ containing all incidence vectors for $ C_j \in \text{supp}(\xi) $.
\end{defn}
Next, we define two diagonal matrices
\begin{equation*}
	R:=\text{diag}\left(\left(\sum_{i\in C_1}\pi_i \right)^2,\ldots \left(\sum_{i\in C_{\binom{m}{k}}}\pi_i \right)^2 \right), \qquad
	L:=\text{diag}(\pi_{1}\pi_{2},\ldots,\pi_{m-1}\pi_{m}).
\end{equation*}
These matrices allow us to rewrite the vector $ \vec{\Q}(\xi) $ with respect to $ \xi $ as follows:
\begin{lemma}
	For a discrete choice design $ \xi $, it holds that \label{lem:Q_in_xi}
	\[\vec{\Q}(\xi)=LS^{T}R^{-1}\xi.\]
\end{lemma}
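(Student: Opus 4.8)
The plan is to prove the identity by a direct coordinatewise computation: unwind the product $LS^{T}R^{-1}\xi$ from the right, compute its $uv$-th entry, and recognize it as $Q_{uv}(\xi)$ from~\eqref{eq:edge_weights}. Since $L$, $S$ and $R$ all act on explicitly indexed spaces (pairs $uv$ in lexicographic order, and choice sets $C_1,\dots,C_{\binom{m}{k}}$), the whole argument is bookkeeping with these two orderings.

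First I would handle $R^{-1}\xi$. The matrix $R$ is diagonal and indexed by choice sets, so $R^{-1}\xi$ is the $\binom{m}{k}$-vector whose $j$-th entry is $w_j/(\sum_{i\in C_j}\pi_i)^{2}$. Next, $S^{T}$ is the $\binom{m}{2}\times\binom{m}{k}$ matrix with columns $s_1,\dots,s_{\binom{m}{k}}$, so $(S^{T})_{uv,j}=s_{j,uv}$, which by definition is $1$ exactly when $u,v\in C_j$ and $0$ otherwise. Hence the $uv$-th entry of $S^{T}R^{-1}\xi$ is
\[
\sum_{j=1}^{\binom{m}{k}} s_{j,uv}\,\frac{w_j}{\left(\sum_{i\in C_j}\pi_i\right)^{2}}
=\sum_{j:\,uv\in C_j}\frac{w_j}{\left(\sum_{i\in C_j}\pi_i\right)^{2}},
\]
where ``$uv\in C_j$'' abbreviates ``$u\in C_j$ and $v\in C_j$''.

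Finally, $L=\text{diag}(\pi_1\pi_2,\dots,\pi_{m-1}\pi_m)$ is diagonal in the \emph{same} lexicographic ordering on pairs, so left multiplication by $L$ scales the $uv$-th coordinate by $\pi_u\pi_v$. Combining the three steps, the $uv$-th entry of $LS^{T}R^{-1}\xi$ equals
\[
\pi_u\pi_v\sum_{j:\,uv\in C_j}\frac{w_j}{\left(\sum_{i\in C_j}\pi_i\right)^{2}}=Q_{uv}(\xi),
\]
which is exactly~\eqref{eq:edge_weights}. Since this holds for every pair $uv$, and $\vec{\Q}(\xi)$ is by definition the vectorization of $(Q_{uv}(\xi))$ in that ordering, the claimed identity $\vec{\Q}(\xi)=LS^{T}R^{-1}\xi$ follows.

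The only point requiring care — the ``obstacle'', such as it is — is keeping the two index sets straight: that $S$ has choice sets as rows and pairs as columns (so $S^{T}$ has the transposed shape that makes the product conformable), that $L$ and $\vec{\Q}$ use the common lexicographic ordering $(12,13,\dots,(m-1)m)$ on pairs, and that $R$ and $\xi$ use the matching ordering on choice sets. Once these conventions are aligned, the computation above is purely mechanical.
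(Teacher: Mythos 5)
Your proof is correct and is exactly the direct index-bookkeeping verification the paper intends (the paper omits the proof entirely, since the identity follows immediately from the definitions of $L$, $S$, $R$ and \eqref{eq:edge_weights}). Nothing is missing.
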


Let $ \Sigma^{(m)}(\xi) $ be the inverse of the reduced information matrix $ M^{(m)}(\xi) $, and let $ \Gamma(\xi) $ be the Farris transform of $ \Sigma^{(m)}(\xi) $.
The dual problem of the $ D $-criterion \eqref{eq:opt_problem_DC} simplified in~$\Gamma(\xi)$.
\begin{prop}\label{prop:dual}
	The dual problem of \eqref{eq:opt_problem_DC} is
	\begin{equation}\label{eq:dual}
		\max_{\Gamma}\;\log \det\begin{pmatrix}
			0&-\mathbf{1}^T\\
			\mathbf{1}&-\frac{\Gamma(\xi)}{2}\\
		\end{pmatrix}, \quad \text{subject to }~~ \Gamma(\xi)\in \mathcal{C}^d \text{ and } R^{-1}S L \vec{\Gamma}(\xi) \le (m-1) \mathbf{1}.
	\end{equation}	
\end{prop}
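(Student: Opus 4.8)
The plan is to pass to the Lagrangian dual of the convex program \eqref{eq:opt_problem_DC} --- which, by Kirchhoff's matrix tree theorem, is nothing but the max-determinant problem $\min_{\xi\in\Delta_{\binom{m}{k}}}\,-\log\det M^{(m)}(\xi)$ --- and then to rewrite the dual variable through the Farris transform, using Lemma~\ref{lem:Q_in_xi}, the inner-product identity \eqref{eq:inner_product}, and a determinantal identity for the Cayley--Menger matrix.

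First I would compute the dual. Abbreviate $M_j:=M^{(m)}(C_j)$, so that $M^{(m)}(\xi)=\sum_j w_j M_j$ depends affinely on $\xi$. I use the Fenchel identity $-\log\det Y=\sup_{Z\succ 0}\bigl[(m-1)-\langle Y,Z\rangle+\log\det Z\bigr]$, valid for positive semidefinite $Y\in\mathbb{S}^{m-1}$ and with the supremum attained at $Z=Y^{-1}$ when $Y$ is nonsingular. Substituting $Y=M^{(m)}(\xi)$ and exchanging the minimum over the compact simplex with the supremum over $\{Z\succ 0\}$ by Sion's minimax theorem (the bracket is affine in $\xi$ and concave in $Z$) shows that the optimal value of \eqref{eq:opt_problem_DC} equals $\max_{Z\succ 0}\bigl[(m-1)+\log\det Z-\max_j\langle M_j,Z\rangle\bigr]$, because the maximum of the affine term over $\Delta_{\binom{m}{k}}$ is $\max_j\langle M_j,Z\rangle$. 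A one-parameter homogeneity argument --- replace $Z$ by $tZ$ and optimize over $t>0$ --- then shows this value is unchanged if one restricts to $\{Z\succ 0:\langle M_j,Z\rangle\le m-1\text{ for all }j\}$, on which the objective reduces to $\log\det Z$. Hence the dual of \eqref{eq:opt_problem_DC} is $\max_{Z\succ 0}\log\det Z$ subject to $\langle M_j,Z\rangle\le m-1$ for all $j$ --- the classical dual of the $D$-optimal design problem, closely related to the Kiefer--Wolfowitz equivalence theorem --- and strong duality holds since the uniform design lies in the relative interior of $\Delta_{\binom{m}{k}}$ and has a nonsingular reduced information matrix (its support is the complete graph), so Slater's condition is met.

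It remains to identify this with \eqref{eq:dual}. Set $A:=Z$ and let $\Gamma$ be the Farris transform of $A$; as recalled in Section~\ref{s:preliminaries}, $A\succ 0$ is equivalent to $\Gamma\in\mathcal{C}^m$, and at the optimum $A=M^{(m)}(\xi^{*})^{-1}=\Sigma^{(m)}(\xi^{*})$, which is why the dual is naturally written in $\Gamma(\xi)$. The objective transforms through the Cayley--Menger identity
\[
  \log\det A \;=\; \log\det\begin{pmatrix}0 & -\mathbf{1}^T\\ \mathbf{1} & -\frac{\Gamma}{2}\end{pmatrix},
\]
the standard determinantal relation between a Gram matrix and its associated squared-distance matrix (see \citet{EH2020,REZ21}). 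For the constraints, the $m\times m$ graph-Laplacian extension of $M_j$ is exactly the information matrix of the single choice set $C_j$, whose edge-weight vector is $\vec{\Q}_j=LS^{T}R^{-1}e_j$ by Lemma~\ref{lem:Q_in_xi}; then \eqref{eq:inner_product} (applied to $\Theta=M(C_j,\pi)$) gives $\langle M_j,A\rangle=\vec{\Gamma}^{\,T}\vec{\Q}_j=(R^{-1}SL\,\vec{\Gamma})_j$, so the system $\langle M_j,A\rangle\le m-1$, $1\le j\le\binom{m}{k}$, is precisely $R^{-1}SL\,\vec{\Gamma}(\xi)\le(m-1)\mathbf{1}$. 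This yields \eqref{eq:dual}.

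I expect the main obstacle to be the passage from the saddle value to the cleanly constrained problem $\max_{Z\succ 0}\log\det Z$ subject to $\langle M_j,Z\rangle\le m-1$: one must justify the minimax exchange over the non-compact, non-closed cone $\{Z\succ 0\}$ --- in particular that the Fenchel identity persists (as $-\infty$) on the singular boundary of the positive semidefinite cone, where $\log\det$ degenerates --- and one must keep track of the fact that the bound on the right-hand side is exactly $(m-1)\mathbf{1}$, which combines the size $m-1$ of the reduced information matrix with the normalization $\mathbf{1}^T\xi=1$. After that, the identifications of the objective (Cayley--Menger) and of the constraints (through \eqref{eq:inner_product}, \eqref{eq:invFarris}, and Lemma~\ref{lem:Q_in_xi}) are routine bookkeeping.
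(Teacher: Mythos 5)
Your argument is correct, but it reaches \eqref{eq:dual} by a genuinely different route than the paper. The paper works directly with the Lagrangian of \eqref{eq:opt_problem_DC} in the design variable $\xi$: it introduces multipliers $\B\ge 0$ and $\mu$ for the simplex constraints, computes $\nabla_\xi \log\sum_{T}\prod_{uv\in T}Q_{uv}(\xi)=R^{-1}SL\vec{\Gamma}(\xi)$ via \citet[Proposition~A.5]{REZ21} and the chain rule, writes down the KKT system \eqref{eq:KKT1}--\eqref{eq:KKT5}, and then pins down $\mu=m-1$ by evaluating the Lagrangian at the optimum using $\dla\vec{\Gamma},\vec{\Q}\dra=m-1$ and complementary slackness. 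You instead invoke the Fenchel representation $-\log\det Y=\sup_{Z\succ 0}[(m-1)-\langle Y,Z\rangle+\log\det Z]$, exchange min and sup by Sion (legitimate: $\Delta_{\binom{m}{k}}$ is compact and the bracket is affine in $\xi$, concave in $Z$), and normalize by the scaling $Z\mapsto tZ$ to land on the classical Kiefer--Wolfowitz-type dual $\max\{\log\det Z:\langle M_j,Z\rangle\le m-1\}$, which you then transport to $\Gamma$-coordinates exactly as the paper does (Cayley--Menger for the objective, \eqref{eq:inner_product} and Lemma~\ref{lem:Q_in_xi} for the constraints). Your route avoids the gradient computation from \citet{REZ21} and makes the link to classical design duality transparent from the outset; the minor care points you flag (behaviour of the Fenchel sup on singular $Y$, and that $\max_j\langle M_j,Z\rangle>0$ for $Z\succ0$ so the scaling is well defined -- which holds because the uniform design's information matrix is nonsingular) are all handled correctly. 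What the paper's route buys, and yours does not directly deliver, is the full KKT system including the complementary slackness condition that becomes Theorem~\ref{thm:KKT-conditions}(iii); in your framework that condition would need a separate short argument (e.g.\ from the equality attained in the scaling step, or from the standard equivalence theorem), so if you intend your proof to also support Theorem~\ref{thm:KKT-conditions} you should add that.
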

In the proof of Proposition~\ref{prop:dual} we derived the Karush-Kuhn-Tucker conditions \eqref{eq:KKT1}-\eqref{eq:KKT5}, which certify optimality and therefore allow an equivalent description of $ D $-optimality:
\begin{theorem}\label{thm:KKT-conditions}
	A discrete choice design $ \xi^* $ is $ D $-optimal if and only if all of the following hold:
	\begin{enumerate}
		\item[(i)] $ \xi^*\in\Delta_{\binom{m}{k}} $,
		\item[(ii)] $ R^{-1}S L \vec{\Gamma}(\xi^*)\le (m-1)\mathbf{1} $,
		\item[(iii)] $ \langle R^{-1}S L \vec{\Gamma}(\xi^*)- (m-1)\mathbf{1}, \xi^*\rangle=0 $.
	\end{enumerate}
\end{theorem}
Theorem~\ref{thm:KKT-conditions} (iii) is equivalent to $ (R^{-1}S L \vec{\Gamma}(\xi^*)- (m-1)\mathbf{1})\cdot \xi^*=\mathbf{0} $, where $ \cdot $ denotes entry-wise multiplication.
The conditions in Theorem~\ref{thm:KKT-conditions} are rephrasing classical optimal design results in the Farris transform $ \Gamma $. A discussion on this relation, together with an example for $ m=6 $ and $ k=3 $ is available in the appendix.

\section{Bradley--Terry paired comparison model}\label{sec:BradleyTerry}

For the Bradley--Terry model ($ k=2 $), the edge-hyperedge incidence matrix $ S $ is the identity matrix. 
This leads to the following corollary of Proposition~\ref{prop:dual}.
\begin{cor}~\label{cor:dualBT}
	For the Bradley--Terry paired comparison model, \eqref{eq:dual} simplifies to
	\begin{equation}\label{eq:dual_BT}
		\operatorname{maximize}\quad\log \det\begin{pmatrix}
			0&-\mathbf{1}^T\\
			\mathbf{1}&-\frac{\Gamma(\xi)}{2}\\
		\end{pmatrix},
		\quad
		\text{subject to } \Gamma(\xi) \le \overline{\Gamma},
	\end{equation}
	where $ \vec{\overline{\Gamma}}=(m-1) L^{-1}R\mathbf{1} $.
\end{cor}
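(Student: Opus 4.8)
The plan is to read off \eqref{eq:dual_BT} directly from Proposition~\ref{prop:dual} using the fact, recorded just above the statement, that for $k=2$ the edge--hyperedge incidence matrix $S$ is the $\binom{m}{2}\times\binom{m}{2}$ identity. The reason is structural: for paired comparisons every choice set $C_j=\{s,t\}$ \emph{is} an edge of the underlying $m$-simplex, and the lexicographic ordering of choice sets coincides with the lexicographic ordering of edges, so each incidence vector $s_j$ is a standard basis vector and $S=I$. I would state this and then substitute $S=I$ into the linear constraint $R^{-1}SL\vec{\Gamma}(\xi)\le(m-1)\mathbf{1}$ of \eqref{eq:dual}, which collapses it to $R^{-1}L\vec{\Gamma}(\xi)\le(m-1)\mathbf{1}$.

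The remaining step is to clear the two diagonal matrices. Both $R=\operatorname{diag}\bigl((\pi_s+\pi_t)^2\bigr)$ and $L=\operatorname{diag}(\pi_s\pi_t)$ have strictly positive diagonal entries, hence so does $L^{-1}R$; and multiplying a componentwise vector inequality by a diagonal matrix with positive entries preserves it. Applying $L^{-1}R$ to both sides of $R^{-1}L\vec{\Gamma}(\xi)\le(m-1)\mathbf{1}$ yields $\vec{\Gamma}(\xi)\le(m-1)L^{-1}R\mathbf{1}=\vec{\overline{\Gamma}}$, which entrywise reads $\Gamma_{st}(\xi)\le(m-1)(\pi_s+\pi_t)^2/(\pi_s\pi_t)$; this matches the $k=2$ specialization of the inequality in Example~\ref{ex:smallest}, a convenient consistency check. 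Everything else in \eqref{eq:dual} is untouched: the objective remains the logarithmic Cayley--Menger determinant of $\Gamma(\xi)$, and the domain constraint $\Gamma(\xi)\in\mathcal{C}^m$ (equivalently, that the Cayley--Menger matrix be positive definite, so the logarithm is finite) carries over verbatim. Combining these observations gives \eqref{eq:dual_BT}.

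I do not expect a genuine obstacle; the content is entirely in the reduction $S=I$, which is immediate for $k=2$. The only points that warrant care are bookkeeping ones: checking that the two lexicographic indexings really do agree (so that $S$ is literally the identity and not merely a permutation matrix), and tracking which of $R$ and $R^{-1}$, respectively $L$ and $L^{-1}$, appears where when transcribing from Lemma~\ref{lem:Q_in_xi}, so as to land on $\vec{\overline{\Gamma}}=(m-1)L^{-1}R\mathbf{1}$ rather than on its reciprocal.
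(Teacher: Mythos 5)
Your proposal is correct and matches the paper's own (very brief) argument: the paper likewise derives the corollary by noting that $S$ is the identity for $k=2$ and substituting into the constraint of Proposition~\ref{prop:dual}, so that multiplying by the positive diagonal matrix $L^{-1}R$ yields $\vec{\Gamma}(\xi)\le(m-1)L^{-1}R\mathbf{1}$. Your extra bookkeeping (checking the lexicographic orderings agree and verifying the entrywise bound $\overline{\Gamma}_{uv}=(m-1)/\lambda_{uv}$) is consistent with the text following the corollary.
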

This optimization problem is equivalent to the dual optimization problem for Gaussian maximum likelihood estimation under Laplacian constraints, compare for example \citet{REZ21} or \citet{YCP2021}.
In fact, Corollary~\ref{cor:dualBT} equals Proposition~6.2 of \citet{REZ21} with $ \vec{\overline{\Gamma}}=(m-1) L^{-1}R\mathbf{1} $ in matrix form.

We denote the choice set that contains the alternatives $ u $ and $ v $ with $ (u,v) $.
Let $ \lambda_{uv}:= \frac{\pi_u \pi_v}{(\pi_u+\pi_v)^{2}} $.
It follows that $ \overline{\Gamma}_{uv}=\frac{m-1}{\lambda_{uv}} $.
As a consequence of Corollary~\ref{cor:dualBT}, we find the following corollary of Theorem~\ref{thm:KKT-conditions}:
\begin{cor}\label{cor:KKT-conditions}
	A Bradley--Terry design $ \xi^* $ is $ D $-optimal if and only if
	\begin{enumerate}
		\item[(i)] $ \xi^*_j\ge 0 $ for all $ 1\le j \le \binom{m}{2} $, and
		\item[(ii)] $ \Gamma(\xi^*)\le \overline{\Gamma} $, and
		\item[(iii)] $ (\Gamma(\xi^*)- \overline{\Gamma})\cdot \xi^*=\mathbf{0} $.
	\end{enumerate}
\end{cor}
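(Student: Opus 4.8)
The plan is to specialize Theorem~\ref{thm:KKT-conditions} to $k=2$, exactly as Corollary~\ref{cor:dualBT} specializes Proposition~\ref{prop:dual}. The only structural input is that for paired comparisons each choice set $C_j=(u,v)$ is a single edge $uv$ of the $m$-simplex, so the edge-hyperedge incidence matrix $S$ is the $\binom{m}{2}\times\binom{m}{2}$ identity. Hence the matrix $R^{-1}SL$ occurring in Theorem~\ref{thm:KKT-conditions} collapses to $R^{-1}L$, and since $R$ and $L$ are diagonal with positive entries this is the diagonal matrix $\text{diag}\big((\lambda_{uv})_{uv}\big)$ with $\lambda_{uv}=\pi_u\pi_v/(\pi_u+\pi_v)^2>0$. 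I would also record from Corollary~\ref{cor:dualBT} that $\vec{\overline{\Gamma}}=(m-1)L^{-1}R\mathbf{1}$, i.e.\ $\overline{\Gamma}_{uv}=(m-1)/\lambda_{uv}$.

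With this in hand, condition (ii) of Theorem~\ref{thm:KKT-conditions}, namely $R^{-1}SL\vec{\Gamma}(\xi)\le(m-1)\mathbf{1}$, becomes the family of scalar inequalities $\lambda_{uv}\Gamma_{uv}(\xi)\le m-1$, one per edge; dividing by $\lambda_{uv}>0$ yields $\Gamma_{uv}(\xi)\le\overline{\Gamma}_{uv}$, and since $\Gamma(\xi)$ and $\overline{\Gamma}$ both vanish on the diagonal this is exactly the entrywise matrix inequality $\Gamma(\xi)\le\overline{\Gamma}$. For condition (iii) I would use the remark following Theorem~\ref{thm:KKT-conditions}: condition (iii) there is the entrywise identity $(R^{-1}SL\vec{\Gamma}(\xi)-(m-1)\mathbf{1})\cdot\xi=\mathbf{0}$, which in the $k=2$ case reads $\lambda_{uv}\big(\Gamma_{uv}(\xi)-\overline{\Gamma}_{uv}\big)w_{uv}=0$ for every $uv$; cancelling the positive scalar $\lambda_{uv}$ turns this into $(\Gamma(\xi)-\overline{\Gamma})\cdot\xi=\mathbf{0}$. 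Condition (i), that $\xi^*$ is a design (nonnegative entries, normalized to sum to one), is inherited verbatim from Theorem~\ref{thm:KKT-conditions}(i).

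I do not anticipate a real obstacle: all the content sits in the observation $S=I$ and in tracking the positive diagonal rescalings $\lambda_{uv}$ relating the general form of the constraints in Theorem~\ref{thm:KKT-conditions} to their simplified Bradley--Terry form. The only point needing care is that the entrywise reduction of condition (iii) is legitimate only because (ii) forces the first factor to be nonpositive while $\xi\ge0$; for that reason I would carry (ii) and (iii) through the argument together rather than treating them separately.
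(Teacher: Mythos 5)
Your proposal is correct and matches the paper's route exactly: the corollary is obtained by specializing Theorem~\ref{thm:KKT-conditions} to $k=2$, where $S$ is the identity so $R^{-1}SL=\mathrm{diag}\bigl((\lambda_{uv})_{uv}\bigr)$, and dividing each scalar constraint by $\lambda_{uv}>0$ yields the entrywise conditions with $\overline{\Gamma}_{uv}=(m-1)/\lambda_{uv}$. The only superfluous point is your final caveat: cancelling $\lambda_{uv}$ in (iii) needs nothing beyond $\lambda_{uv}>0$, so (ii) and (iii) can be treated independently.
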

The particularly simple structure of the Bradley--Terry paired comparison model allows a direct graphical interpretation of the design as graph and the design weights as edge weights.
Each choice set and entry of $ \xi $ corresponds to one pair of alternatives which we always linearize in lexicographic order, i.e.\ $ \xi=(w_{12},w_{13},\ldots w_{(m-1)m}) $.
Equation~\eqref{eq:edge_weights} simplifies to
\[Q_{uv}(\xi)=\lambda_{uv}w_{uv}.\]
Consequently, a vanishing design weight is a non-edge in the graph representation of the information matrix.
Corollary~\ref{cor:KKT-conditions}~(iii) thus inflicts sparsity in the graph. 

We now study the graphical representation of a $ D $-optimal design for the Bradley--Terry paired comparison model in detail.
A graph $ G=(V,E) $ is called decomposable when it is a complete graph or when its vertex set $ V $ can be written as a union $ V=V_1\cup V_2 $ where the induced subgraph with vertex set $ V_1\cap V_2 $ is a complete graph and the induced subgraphs with vertex sets $ V_1,V_2 $ are both decomposable.
A graph is decomposable if and only if it is chordal, that is all its cycles of length four or more have a chord.
The following theorem illustrates how to uniquely obtain the design weights as rational functions in $ \pi $, when the support graph of the design is decomposable. 
In accordance with the language of graphical models, we refer to the vertex sets of complete subgraphs as cliques and to the intersection of two cliques as separators.

\begin{theorem}\label{thm:decomposable}
	Let $ \xi^* $ be a $ D $-optimal design for the parameter~$\pi$.
	When the graph $ G=([m],\text{supp}(\xi^*)) $ is decomposable, then $ \xi^* $ is a rational function in~$\pi$.
	Precisely, the unique $ D $-optimal design $ \xi^* $ is recovered from $ \xi_{uv}^*=-\frac{M_{uv}(\xi^*,\pi)}{\lambda_{uv}}$,
	where we obtain $M(\xi^*,\pi)$ from~$\Gamma(\xi^*,\pi)$.
\end{theorem}

Theorem~\ref{thm:decomposable} only applies to decomposable graphs.
Simulations indicate that generically, $ D $-optimal designs correspond to decomposable graphs:
\begin{conj}\label{conj:decomposable}
	In the Bradley--Terry model, a $ D $-optimal design's graph is decomposable.
\end{conj}
In the optimization problem $ \eqref{eq:dual_BT} $, the matrix $ \overline{\Gamma} $ is parameterized by the low-dimensional parameter vector $ \pi\in\RR^m $. 
This parameterization seems to enforce the chordality of the solution of $ \eqref{eq:dual_BT} $, as arbitrary sample variograms $ \overline{\Gamma} $ do not necessarily imply chordality \citep{REZ21}.
It is easy to see via Theorem~\ref{cor:KKT-conditions} that $ \eqref{eq:dual_BT} $ is equivalent to a graphical model with respect to the graph corresponding to its solution.
Results from algebraic statistics for discrete graphical models \citep{lauritzen1996,GMS2006} and Gaussian graphical models \citep{SU2010} link properties of maximum likelihood estimators with decomposable graphs.
In the spirit of these results, studying the maximum likelihood degree \citep{HS2014} of semidefinite Gaussian graphical models for sufficient statistics that depend on a lower dimensional parameterization could provide a proof for Conjecture~\ref{conj:decomposable}.
Solving the conjecture would also solve \citet[Problem 15]{KRS2021}, as the corresponding graphs are $ 4 $-cycles and therefore non-decomposable.

An extensive treatment of designs for three and four alternatives that are supported on various decomposable graphs as well as saturated designs are presented in the appendix in Section~\ref{appendix:examples}. These recover the findings of \citet{Grasshoff2008,KRS2021}, but in the improved framework of the present paper. 
We further present an example with five alternatives. 
The computed symbolic solutions for the $ D $-optimal design were unknown before, as the methods of \citet{KRS2021} were not able to solve this problem in reasonable time. 
With our new methodology, the problem becomes computationally very simple, as the problem is linear in $ \Gamma(\xi,\pi) $. The details are available in a \texttt{Mathematica} notebook.

\section{Algorithms}\label{sec:alg}

As explained in Section~\ref{sec:BradleyTerry}, Corollary~\ref{cor:dualBT} translates the optimal design problem to a Gaussian maximum likelihood estimation problem for which algorithms are available, for example those in \citet{REZ21,YCP2021}.
One such algorithm is the the block descent \citet[Algorithm~1]{REZ21}, which is implemented in the function \texttt{emtp2} in the \texttt{graphicalExtremes} \citep{graphicalExtremes} package in R.
As $ \overline{\Gamma} $ is a dually feasible point, and every optimization step in the algorithm preserves dual feasibility, convergence is guaranteed up to the numerical precision of the employed quadratic programming solver, compare \citet[p.~22]{REZ21}.
Other algorithms for this problem rely on gradient descent methods, see for example \citet{YCP2021} or \citet{EPO2017}.
For the Bradley--Terry paired comparison model it seems best to use these readily available algorithms.

For the general discrete choice problem, the linear constraint in the dual problem in Proposition~\ref{prop:dual} does not allow a simple reformulation as a coordinate-wise constraint on $ \Gamma(\xi,\pi) $ like in the Bradley--Terry paired comparison model.
This follows from the non-quadratic form of the edge-hyperedge incidence matrix $ S $.
As a consequence, the above algorithms in the previous paragraph are not applicable to solve the dual problem.
Our Algorithm~\ref{alg:DCD} below applies to a general discrete choice problem.
We employ the gradient descent algorithm \texttt{SLSQP} available in the \texttt{nloptr} package in \texttt{R} to find the unique optimal point $\Gamma(\xi^*,\pi)$.
Again using the \texttt{SLSQP} algorithm, we compute one $ D $-optimal design $ \xi^*_1 $ from the optimal $ \Gamma(\xi^*,\pi) $ by solving a quadratic program that minimizes $ ||\vec{\Q}(\xi^*,\pi)-LS^{T}R^{-1}\xi||_2 $ under the design constraints, where $ \vec{\Q}(\xi^*,\pi) $ is obtained from the optimal $ \Gamma(\xi^*,\pi) $.

\begin{algorithm} 
	\label{alg:DCD}
	$ $ \\[1ex]
	\textbf{Input:} {A parameter vector $ \pi\in\RR_{> 0}^{m} $ and the choice set size $ k $.}
	
	\noindent
	\textbf{Initialize:} Define the objective function $ \log \det\left(\begin{smallmatrix}
		0&-\mathbf{1}^T\\
		\mathbf{1}&-\frac{\Gamma(\xi)}{2}\\
	\end{smallmatrix}\right) $ and the constraint vector $ R^{-1}S L \vec{\Gamma}(\xi) - (m-1) \mathbf{1} $ and their derivatives.
	
	\noindent
	\textbf{Computation:} \begin{enumerate}
		\item Solve the optimization problem in $ \Gamma $ using the \texttt{SLSQP} algorithm.
		\item Find a $D$-optimal design from the optimal $\Gamma^*$, again using the \texttt{SLSQP} algorithm. 
	\end{enumerate}
	
	\noindent
	\textbf{Output: }{A $ D $-optimal design $ \xi_1 $, the solution $ \Gamma^* $, the directional derivatives for $ \xi_1 $, and the error in step~2.}
\end{algorithm}

As a potential measure of convergence, the duality gap is the difference of the primal and the dual objective functions, where we rewrite the primal problem to include the equality constraints:
\begin{align*}
	-\log \sum_{T\in\mathcal{T}}\prod_{uv\in T}Q_{uv}(\xi)+\langle (m-1) \mathbf{1},\xi-\frac{1}{\binom{m}{k}}\mathbf{1}\rangle -\log \det\begin{pmatrix}
		0&-\mathbf{1}^T\\
		\mathbf{1}&-\frac{\Gamma(\xi)}{2}\\
	\end{pmatrix}
	=(m-1) (\mathbf{1}^T \xi -1).
\end{align*}
In our computations, this function is used to assess convergence of the optimization procedure.
A gap of less than $10^{-16}$ is considered as zero and the optimization problem as solved.

\subsection{Performance}
We study the performance of our implementation of Algorithm~\ref{alg:DCD} for $ m\in\{8,10\} $ and $ k\in\{3,4,5,6\} $. The parameter $ \pi $ is sampled uniformly from $[1,20]^d$.
Table~\ref{tab:performance} shows the largest value of the directional derivatives for the computed design, the duality gap and the computation time, averaged over $ n=10 $ simulations.
Note that these examples are already quite high-dimensional with respect to the design. For example for $ m=10 $ and $ k=5 $, there are $ \binom{10}{5}=252 $ different choice sets, while $ \vec{\Gamma}(\xi,\pi) $ only has $ \binom{10}{2}=45 $ entries.
As a result for a growing number of choice sets, obtaining the optimal (and unique) $ \Gamma(\xi^*,\pi) $ is much less expensive than the quadratic program that computes a $ D $-optimal design from the optimal $ \Gamma(\xi^*,\pi) $.
The computation was conducted on a standard laptop.

\begin{table}[ht]
	\centering
	\addtolength{\tabcolsep}{-1.2pt}    
	\begin{tabular}{rrrrrrrrr}
		\hline
		&$  m=8 $ & $ m=8 $ & $ m=8 $ & $ m=8 $ & $ m=10 $ & $ m=10 $ & $ m=10 $ & $ m=10 $ \\ 
		&$  k=3 $ & $ k=4 $ & $ k=5 $ & $ k=6 $ & $ k=3 $ & $ k=4 $ & $ k=5 $ & $ k=6 $ \\ 
		\hline
		Dir.~der.& 5.12e-8 & 3.16e-8 & 6.02e-8 & 2.21e-8 & 6.78e-8 & 3.74e-4 & 5.30e-5 & 1.68e-7 \\ 
		Dual.~gap & 0 & 0 & 0 & 0 & 0 & 2.00e-16 & 0 & 0 \\ 
		Step (1) time & 0.05 & 0.05 & 0.05 & 0.05 & 0.18 & 0.24 & 0.27 & 0.24 \\ 
		Step (2) time & 0.16 & 0.43 & 0.19 & 0.02 & 2.40 & 29.40 & 49.31 & 21.54 \\ 
		\hline
	\end{tabular}
	\addtolength{\tabcolsep}{1.2pt}    
	\caption{The performance table shows the averaged directional derivatives, duality gaps and computation times in seconds for the two steps in Algorithm~\ref{alg:DCD}. We observe that finding the $ D $-optimal $ \Gamma $ in step (1) is fast, but deriving a $ D $-optimal design from the $ D $-optimal $ \Gamma $ with high precision is more expensive with growing dimension.}\label{tab:performance}
\end{table}

\section{Applications}\label{sec:applications}

In this section, we demonstrate our new methodology in applications.
The $ D $-efficiency of a design $ \xi $ for a parameter $ \pi $ is defined as
\[\text{eff}_{D}(\xi,\pi)=\left(\frac{\det(M^{(m)}(\xi,\pi_0))}{\det(M^{(m)}(\xi^*,\pi_0))}\right)^{\frac{1}{m-1}},\]
where $ \xi^* $ is a locally $ D $-optimal design for $ \pi $.
This means that a $ D $-optimal design has $ D $-efficiency one.
The $ D $-efficiency of a design describes the loss of information caused by a non-optimal design.
For example, an efficiency of $ \frac{1}{2} $ implies that twice the amount of observations is needed to obtain the same information as when using an optimal design.
The computation of a $ D $-optimal matrix $ \Gamma(\xi^*,\pi) $ allows us to evaluate the $ D $-efficiencies of specific, common discrete choice designs.
As a first application, we study a poll dataset that investigates the perception of climate change.

\subsection{Perception of climate change}

We study the discrete choice dataset \texttt{icons} available in the \texttt{R} package \texttt{hyper2} \citep{Hankin2017}.
In the study, 133 participants from Norfolk, UK were asked to select among $ k=4 $ out of $ m=6 $ climate change concerns the icon that they perceive as most concerning.
The icons are NB (flooding of the Norfolk Broads national park), L (London flooding due to sea level rise), PB (Polar Bear extinction), THC (slowing or stop of the thermo-haline circulation), OA (oceanic acidification) and WAIS (West Antarctic Ice Sheet melting).
The responses are presented in Table~\ref{tab:icons}. We observe that out of $ \binom{6}{4} $ possible choice sets, the study design assigns varying proportions of observations to $ 9 $ different choice sets, which thus constitute the support of the study design.
\begin{table}
	\centering
	\begin{tabular}{ccccccc}
		\hline
		choice set& NB & L & PB & THC & OA & WAIS \\ 
		\hline
		1 &   5 &   3 &  &   4 &  &   3 \\ 
		2 &   3 &  &   5 &   8 &  &   2 \\ 
		3 &  &   4 &   9 &   2 &  &   1 \\ 
		4 &  10 &   3 &  &   3 &   4 &  \\ 
		5 &   4 &  &   5 &   6 &   3 &  \\ 
		6 &  &   4 &   3 &   1 &   3 &  \\ 
		7 &   5 &   1 &  &  &   1 &   2 \\ 
		8 &   5 &  &   1 &  &   1 &   1 \\ 
		9 &  &   9 &   7 &  &   2 &   0 \\ 
		\hline
	\end{tabular}
	\caption{Responses of 133 participants presented with choice sets of size 4 from a set of icons NB, L, PB, THC, OA and WAIS (see text for explanation). Each row corresponds to one choice set, such that the entries in the table correspond to the selection of each icon in the respective choice set.}\label{tab:icons}
\end{table}
The \texttt{hyper2} package provides a maximum likelihood estimate 
\begin{align*}
	\hat{\pi}&=( \pi_\text{NB},\hat{\pi}_\text{L},\hat{\pi}_\text{PB},\hat{\pi}_\text{THC},\hat{\pi}_\text{OA},\hat{\pi}_\text{WAIS})=(0.2523, 0.1736, 0.2246, 0.1701, 0.1107, 0.0687).
\end{align*} 
We compute the $ D $-optimal approximate design for $ \hat{\pi} $ with Algorithm~\ref{alg:DCD}.
The $ D $-efficiencies for the study design, the complete design with constant design weights, which in this setting is the only possible balanced incomplete block design, and a rounded version (i.e.~rounded such that $ w_i=\frac{n_i}{133} $ for $ n_i\in \NN_0 $) of the $ D $-optimal design are as follows, showing that the study design has the lowest efficiency.
\begin{center}
	\begin{tabular}{rrrr}
		\hline
		& study & complete & rounded \\ 
		\hline
		$ D $-efficiency & 0.95173 & 0.96643 & 0.99997 \\ 
		\hline
	\end{tabular}
\end{center}

\subsection{Cricket}
We study the \texttt{T20} dataset from \texttt{hyper2} \citep{Hankin2017}.
It contains 633 cricket matches results between 13 teams in the Indian Premier League for the period from 2008 to 2017, with seven draws and three no-result matches removed.
The package provides a maximum likelihood estimate $ \hat{\pi} $ for the playing strength of each team in a Bradley--Terry paired comparison model:
\[ \scriptstyle
\hat{\pi}=(0.1177,0.0503,0.0614,0.0634,0.0867,0.0571,0.0724,0.1106,0.0296,0.0767,0.0816,0.0926,0.0999)
\]
The team names are available in the \texttt{hyper2} package.
We compute the $ D $-optimal design in $ \hat{\pi} $, the efficiencies of the observed design and of the complete design.
The results are as follows.
\vspace{1ex}
\begin{center}
	\begin{tabular}{rrr}
		\hline
		& observed & complete \\ 
		\hline
		$ D $-efficiency & 0.77507 & 0.98792 \\ 
		\hline
	\end{tabular}
\end{center}
\vspace{1ex}
Thus the observed design has comparably low efficiency.
Although the match scheduling was obviously not chosen to optimize the $ D $-efficiency for learning the playing strength, this example demonstrates the sensitivity of the $ D $-efficiency.
Our methodology allows us to quantify the loss of $ D $-efficiency and to certify that the constant weight complete design is more suitable. 

\subsection{Simulated parameters}\label{sec:simulation}
We assume that $ m=6 $ and $ k=3 $ and simulate parameters $ \beta_u,\;1\le u\le 6 $ from independent centered normal distributions with standard deviation~$\sigma$. This implies that $ \pi_u,\;1\le u\le 6 $ are sampled from a log-normal distribution.
For the initial guess $ \beta=\bs 0 $, that is $ \pi=\bs 1 $, the choice probabilities are all equal. 
In this case, the complete design with equal weights is $ D $-optimal.
For each of the standard deviations $ \sigma\in\{0.5,1,1.5,2\} $ we sampled $n=1000$ parameter vectors and computed the $ D $-optimal design with our implementation of Algorithm~\ref{alg:DCD}, as well as the $D$-efficiency of the complete design with equal weights for that parameter.
The results are shown in Table~\ref{tab:simulation}.
As the standard deviation increases, the sampled parameters lie further from the origin, which should correspond to decreasing efficiency for the complete design with equal weights (compare also Section~\ref{sec:eff_line}).
Indeed, we observe this behavior in the simulation.
Furthermore, the $ D $-optimal designs have decreasing support size, which aligns well with similar observations for the Bradley--Terry paired comparison model that were discussed by \citet{KRS2021}.
\begin{table}[ht]
	\centering
	\begin{tabular}{rrrrr}
		\hline
		$ \sigma $	& 0.5 & 1 & 1.5 & 2 \\ 
		\hline
		Directional derivatives & 5.88e-12 & 9.97e-09 & 3.34e-08 & 1.97e-05 \\ 
		Duality gap & 0 & 1.11e-16 & 1.11e-16 & 1.08e-16 \\ 
		$ D $-efficiency & 0.9943 & 0.8930 & 0.6832 & 0.5414 \\ 
		Mean support size & 19.9840 & 11.2850 & 6.8740 & 5.9170 \\ 
		Time & 0.0153 & 0.0296 & 0.0382 & 0.0473\\ 
		\hline
	\end{tabular}
	\caption{Results for logarithmic parameters sampled from a normal distribution. The table shows the average of the largest directional derivative, the duality gap, the efficiency of the complete design with equal weights, the support size of the $ D $-optimal design, and the computation time of Algorithm~\ref{alg:DCD}. We observe decreasing efficiency of the complete design with equal design weights for increasing standard deviation of the parameter.}\label{tab:simulation}
\end{table}

\subsection{Efficiency comparison}\label{sec:eff_line}
Here we investigate the efficiency of the complete design with constant design weights and two balanced incomplete block designs (BIBDs) for $ m=6 $ and $ k=3 $.
The first BIBD is
\[\xi_1=\frac{1}{10}(0,1,0,1,0,1,1,1,0,0,1,1,0,0,0,1,0,1,0,1)^T,\]
where the individual choice sets can be recovered from the columns of the matrix $ S^T $ in Example~\ref{ex:smallest}.
A second BIBD is $ \xi_2=\frac{1}{10}\mathbf{1}-\xi_1 $. 
Clearly, $ \xi_1 $ and $ \xi_2 $ have complementary support.
These designs are also available in the \texttt{R} implementation.
To evaluate the $D$-efficiency in a given distance from the true parameter, we compute it on a line in logarithmic parameter space starting in the origin.
We evaluate 100 points on the line parameterized by 
\begin{align*}
	\pi=(\pi_1,\pi_1^{1/2},\pi_1^{5/4},\pi_1^{7/4},\pi_1^{3/4},1),
\end{align*}
where $ \pi_1=\exp(\ell/10) $ and $ \ell=0,1,\ldots,99 $.
The resulting efficiencies are plotted in Figure~\ref{fig:efficiency}.
We observe that all designs are $ D$-optimal for $ \pi=\bs 1 $, as explained by design theory.
Again, increasing Euclidean distance to the origin decreases the $ D $-efficiency of the complete design with constant design weights.
For $ \ell=100 $, we compute the $ D $-optimal design
\[\xi^{*}\approx(0,0,0.19196,0,0.20127,0.21450,0,0,0,0,0,0,0,0,0,0.39228,0,0,0,0).\]
The support of $ \xi^* $ is contained in the support of $ \xi_1 $.
Besides, the $ D $-efficiency of the complete design with constant design weights lies between the $ D $-efficiencies of $ \xi_1 $ and $ \xi_2 $ in Figure~\ref{fig:efficiency}, further suggesting that this results from the complementary supports of the designs.
\begin{figure}
	\centering
	\includegraphics[scale=0.5]{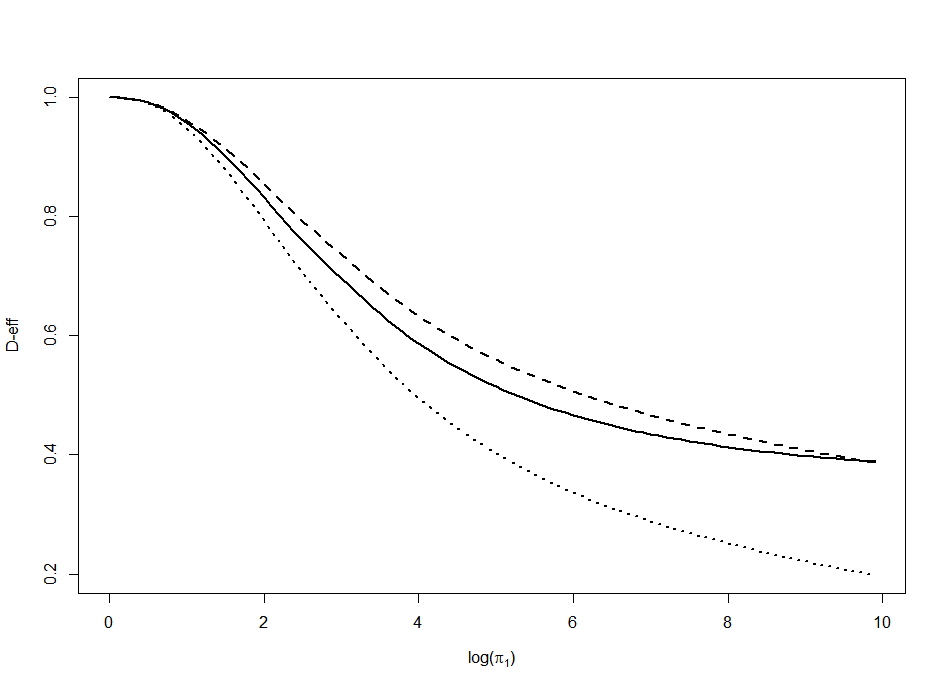}
	\caption{Efficiencies for 100 points on a line starting in the origin for three designs. The solid line corresponds to the complete design with constant weights, the dashed to the BIBD $ \xi_1 $ and the dotted line to the BIBD $ \xi_2 $.}\label{fig:efficiency}
\end{figure}

\subsection*{Acknowledgements}
Frank R\"ottger was supported by the Swiss National Science Foundation (Grant 186858).
Thomas Kahle was partially supported by the Deutsche Forschungsgemeinschaft (DFG,
German Research Foundation) -- 314838170, GRK~2297 MathCoRe.

\bibliographystyle{chicago}
\bibliography{bibliography2}

\begin{thebibliography}{}

\bibitem[\protect\citeauthoryear{Bailey and Cameron}{Bailey and Cameron}{2009}]{BC2009}
Bailey, R.~A. and P.~J. Cameron (2009).
\newblock Combinatorics of optimal designs.
\newblock In {\em Surveys in combinatorics 2009}, Volume 365 of {\em London Math. Soc. Lecture Note Ser.}, pp.\  19--73. Cambridge Univ. Press, Cambridge.

\bibitem[\protect\citeauthoryear{Bakonyi and Woerdeman}{Bakonyi and Woerdeman}{2011}]{BW2011}
Bakonyi, M. and H.~J. Woerdeman (2011).
\newblock {\em Matrix Completions, Moments, and Sums of Hermitian Squares}.
\newblock Princeton University Press.

\bibitem[\protect\citeauthoryear{Bradley and Terry}{Bradley and Terry}{1952}]{BradleyTerry}
Bradley, R.~A. and M.~E. Terry (1952).
\newblock Rank analysis of incomplete block designs. {I}. {T}he method of paired comparisons.
\newblock {\em Biometrika\/}~{\em 39}, 324--345.

\bibitem[\protect\citeauthoryear{Chernoff}{Chernoff}{1953}]{Chernoff}
Chernoff, H. (1953).
\newblock Locally optimal designs for estimating parameters.
\newblock {\em Annals of Mathematical Statistics\/}~{\em 24}, 586--602.

\bibitem[\protect\citeauthoryear{Dette}{Dette}{1996}]{Dette1996}
Dette, H. (1996).
\newblock A note on {B}ayesian {$c$}- and {$D$}-optimal designs in nonlinear regression models.
\newblock {\em Ann. Statist.\/}~{\em 24\/}(3), 1225--1234.

\bibitem[\protect\citeauthoryear{Devriendt}{Devriendt}{2022}]{devriendt2020}
Devriendt, K. (2022).
\newblock Effective resistance is more than distance: {L}aplacians, {S}implices and the {S}chur complement.
\newblock {\em Linear Algebra and its Applications\/}~{\em 639}, 24--49.

\bibitem[\protect\citeauthoryear{Egilmez, Pavez, and Ortega}{Egilmez et~al.}{2017}]{EPO2017}
Egilmez, H.~E., E.~Pavez, and A.~Ortega (2017).
\newblock Graph learning from data under {L}aplacian and structural constraints.
\newblock {\em IEEE Journal of Selected Topics in Signal Processing\/}~{\em 11\/}(6), 825--841.

\bibitem[\protect\citeauthoryear{Engelke and Hitz}{Engelke and Hitz}{2020}]{EH2020}
Engelke, S. and A.~S. Hitz (2020).
\newblock Graphical models for extremes.
\newblock {\em Journal of the Royal Statistical Society: Series B (Statistical Methodology)\/}~{\em 82\/}(4), 871--932.

\bibitem[\protect\citeauthoryear{Engelke, Hitz, Gnecco, and Hentschel}{Engelke et~al.}{2022}]{graphicalExtremes}
Engelke, S., A.~S. Hitz, N.~Gnecco, and M.~Hentschel (2022).
\newblock {\em graphical{E}xtremes: Statistical Methodology for Graphical Extreme Value Models}.
\newblock Available from \url{https://github.com/sebastian-engelke/graphicalExtremes}.

\bibitem[\protect\citeauthoryear{Fahrmeir and Kaufmann}{Fahrmeir and Kaufmann}{1985}]{FK1985}
Fahrmeir, L. and H.~Kaufmann (1985).
\newblock {Consistency and Asymptotic Normality of the Maximum Likelihood Estimator in Generalized Linear Models}.
\newblock {\em The Annals of Statistics\/}~{\em 13\/}(1), 342 -- 368.

\bibitem[\protect\citeauthoryear{Frongillo and Kash}{Frongillo and Kash}{2021}]{frongillo2021general}
Frongillo, R.~M. and I.~A. Kash (2021).
\newblock General truthfulness characterizations via convex analysis.
\newblock {\em Games and Economic Behavior\/}~{\em 130}, 636--662.

\bibitem[\protect\citeauthoryear{Geiger, Meek, and Sturmfels}{Geiger et~al.}{2006}]{GMS2006}
Geiger, D., C.~Meek, and B.~Sturmfels (2006).
\newblock {On the toric algebra of graphical models}.
\newblock {\em The Annals of Statistics\/}~{\em 34\/}(3), 1463 -- 1492.

\bibitem[\protect\citeauthoryear{Gra{\ss}hoff and Schwabe}{Gra{\ss}hoff and Schwabe}{2008}]{Grasshoff2008}
Gra{\ss}hoff, U. and R.~Schwabe (2008).
\newblock Optimal design for the {B}radley--{T}erry paired comparison model.
\newblock {\em Statistical Methods and Applications\/}~{\em 17\/}(3), 275--289.

\bibitem[\protect\citeauthoryear{Graßhoff, Großmann, Holling, and Schwabe}{Graßhoff et~al.}{2013}]{GGHS2013}
Graßhoff, U., H.~Großmann, H.~Holling, and R.~Schwabe (2013).
\newblock Optimal design for discrete choice experiments.
\newblock {\em Journal of Statistical Planning and Inference\/}~{\em 143\/}(1), 167--175.

\bibitem[\protect\citeauthoryear{Grossmann and Schwabe}{Grossmann and Schwabe}{2015}]{GS2015}
Grossmann, H. and R.~Schwabe (2015).
\newblock Design for discrete choice experiments.
\newblock In {\em Handbook of design and analysis of experiments}, Chapman \& Hall/CRC Handb. Mod. Stat. Methods, pp.\  787--831. CRC Press, Boca Raton, FL.

\bibitem[\protect\citeauthoryear{Hankin}{Hankin}{2017}]{Hankin2017}
Hankin, R. K.~S. (2017).
\newblock Partial rank data with the \texttt{hyper2} package: likelihood functions for generalized {Bradley--Terry} models.
\newblock {\em The R Journal\/}~{\em 9\/}(2), 429--439.

\bibitem[\protect\citeauthoryear{Hentschel, Engelke, and Segers}{Hentschel et~al.}{2024}]{Hentschel2021}
Hentschel, M., S.~Engelke, and J.~Segers (2024).
\newblock Statistical inference for {H}\"usler--{R}eiss graphical models through matrix completions.
\newblock {\em Journal of the American Statistical Association\/}~{\em 0\/}(0), 1--13.

\bibitem[\protect\citeauthoryear{Huh and Sturmfels}{Huh and Sturmfels}{2014}]{HS2014}
Huh, J. and B.~Sturmfels (2014).
\newblock Likelihood geometry.
\newblock In {\em Combinatorial algebraic geometry}, pp.\  63--117. Springer.

\bibitem[\protect\citeauthoryear{Kahle, R{\"o}ttger, and Schwabe}{Kahle et~al.}{2021}]{KRS2021}
Kahle, T., F.~R{\"o}ttger, and R.~Schwabe (2021).
\newblock The semialgebraic geometry of saturated optimal designs for the {B}radley--{T}erry model.
\newblock {\em Algebraic Statistics\/}~{\em 12\/}(1), 97--114.

\bibitem[\protect\citeauthoryear{Kiefer}{Kiefer}{1959}]{Kiefer1959}
Kiefer, J. (1959).
\newblock Optimum experimental designs.
\newblock {\em Journal of the Royal Statistical Society. Series B\/}~{\em 21\/}(2), 272--319.

\bibitem[\protect\citeauthoryear{Kiefer}{Kiefer}{1974}]{Kief74}
Kiefer, J. (1974).
\newblock General equivalence theory for optimum designs (approximate theory).
\newblock {\em The Annals of Statistics\/}~{\em 2\/}(5), 849--879.

\bibitem[\protect\citeauthoryear{Kirkland, Neumann, and Shader}{Kirkland et~al.}{1997}]{KNS97}
Kirkland, S., M.~Neumann, and B.~Shader (1997, 10).
\newblock Distances in weighted trees and group inverse of {L}aplacian matrices.
\newblock {\em Siam Journal on Matrix Analysis and Applications\/}~{\em 18}.

\bibitem[\protect\citeauthoryear{Kumar, Ying, de~Miranda~Cardoso, and Palomar}{Kumar et~al.}{2019}]{KYVP2019}
Kumar, S., J.~Ying, J.~V. de~Miranda~Cardoso, and D.~Palomar (2019).
\newblock Structured graph learning via {L}aplacian spectral constraints.
\newblock {\em Advances in neural information processing systems\/}~{\em 32}.

\bibitem[\protect\citeauthoryear{Lauritzen}{Lauritzen}{1996}]{lauritzen1996}
Lauritzen, S.~L. (1996).
\newblock {\em Graphical Models}.
\newblock Oxford Statistical Science Series. Oxford University Press.

\bibitem[\protect\citeauthoryear{McFadden}{McFadden}{1974}]{McF74}
McFadden, D. (1974).
\newblock Conditional logit analysis of qualitative choice behavior.
\newblock {\em Frontiers in econometrics\/}, 105--142.

\bibitem[\protect\citeauthoryear{Rodríguez}{Rodríguez}{2009}]{Rod2009}
Rodríguez, J. (2009).
\newblock Laplacian eigenvalues and partition problems in hypergraphs.
\newblock {\em Applied Mathematics Letters\/}~{\em 22\/}(6), 916--921.

\bibitem[\protect\citeauthoryear{R\"{o}ttger, Engelke, and Zwiernik}{R\"{o}ttger et~al.}{2023}]{REZ21}
R\"{o}ttger, F., S.~Engelke, and P.~Zwiernik (2023).
\newblock Total positivity in multivariate extremes.
\newblock {\em The Annals of Statistics\/}~{\em 51\/}(3), 962--1004.

\bibitem[\protect\citeauthoryear{Silvey}{Silvey}{1980}]{silvey1980optimal}
Silvey, S.~D. (1980).
\newblock {\em Optimal design: an introduction to the theory for parameter estimation}.
\newblock Monographs on applied probability and statistics. Chapman and Hall.

\bibitem[\protect\citeauthoryear{Sturmfels and Uhler}{Sturmfels and Uhler}{2010}]{SU2010}
Sturmfels, B. and C.~Uhler (2010).
\newblock Multivariate {G}aussians, semidefinite matrix completion, and convex algebraic geometry.
\newblock {\em Annals of the Institute of Statistical Mathematics\/}~{\em 62\/}(4), 603--638.

\bibitem[\protect\citeauthoryear{Su}{Su}{2022}]{SuScoring}
Su, W.~J. (2022).
\newblock A truthful owner-assisted scoring mechanism.
\newblock {\em preprint, arXiv:2206.08149\/}.

\bibitem[\protect\citeauthoryear{Sun and Dean}{Sun and Dean}{2016}]{SD2016}
Sun, F. and A.~Dean (2016).
\newblock A-optimal and {A}-efficient designs for discrete choice experiments.
\newblock {\em Journal of Statistical Planning and Inference\/}~{\em 170}, 144--157.

\bibitem[\protect\citeauthoryear{Ying, Cardoso, and Palomar}{Ying et~al.}{2021}]{YCP2021}
Ying, J., J.~M. Cardoso, and D.~Palomar (2021).
\newblock Minimax estimation of {L}aplacian constrained precision matrices.
\newblock In {\em International Conference on Artificial Intelligence and Statistics}, pp.\  3736--3744. PMLR.

\bibitem[\protect\citeauthoryear{Zermelo}{Zermelo}{1929}]{zermelo29}
Zermelo, E. (1929).
\newblock Die {B}erechnung der {Turnier-Ergebnisse} als ein {M}aximumproblem der {W}ahrscheinlichkeitsrechnung.
\newblock {\em Mathematische Zeitschrift\/}~{\em 29}, 436--460.

\end{thebibliography}

\appendix

\section{Proof of Proposition~\ref{prop:dual}}
\begin{proof}
	Let 
	\[f(\xi)=\begin{cases}
		-\log \sum_{T\in\mathcal{T}}\prod_{uv\in T}Q_{uv}(\xi),& \text{when } \xi \in\Delta_{\binom{m}{k}},\\
		+\infty, &\text{when } \xi \in \RR^{\binom{m}{k}}\setminus \Delta_{\binom{m}{k}}, \\
	\end{cases}\]
	be an extended real-valued function, such that $ f(\xi) $ equals the objective function of the $ D $-criterion when $ \xi $ is a design and $ +\infty $ otherwise.
	Now, the $ D $-criterion can be reformulated as 
	\begin{align}
		\operatorname{minimize}\quad f(\xi).\label{eq:opt_problem_DC_equiv}
	\end{align}
	The Lagrangian of the $ D $-criterion is
	\begin{align*}
		\mathcal{L}(\xi,\B,\mu)=-\log \sum_{T\in\mathcal{T}}\prod_{uv\in T}Q_{uv}(\xi)-\langle \B,\xi \rangle +\langle \mu \mathbf{1},\xi-\frac{1}{\binom{m}{k}}\mathbf{1}\rangle
	\end{align*}
	with $ \B\in \RR^{\binom{m}{k}}_{\ge 0} $, $ \mu \in \RR $.
	We observe that
	\[\sup_{\B\ge 0,\mu}\mathcal{L}(\xi,\B,\mu)=\begin{cases}
		-\log \sum_{T\in\mathcal{T}}\prod_{uv\in T}Q_{uv}(\xi), & \text{when } \xi \in\Delta_{\binom{m}{k}},\\
		+\infty, & \text{when } \xi \in \RR^{\binom{m}{k}}\setminus \Delta_{\binom{m}{k}}.\\
	\end{cases}\]
	It follows that \eqref{eq:opt_problem_DC_equiv} can be rewritten as
	\begin{align}
		\inf_{\xi}\sup_{B\ge 0,\mu}\mathcal{L}(\xi,\B,\mu).\label{eq:infsup}
	\end{align}
	As all constraints in \eqref{eq:infsup} are linear, duality theory (Slater's condition) infers that \eqref{eq:infsup} is invariant under swapping the infimum and supremum. 
	The Lagrange dual function is
	\begin{align*}
		\inf_{\xi}\mathcal{L}(\xi,\B,\mu).
	\end{align*}
	According to \citet[Proposition~A.5]{REZ21}, it holds that $ \nabla_{Q}\log \sum_{T\in\mathcal{T}}\prod_{uv\in T}Q_{uv}=\Gamma$.
	It then follows from the multivariable chain rule that 
	\begin{align*}
		\frac{\partial}{\partial _{w_j}}\left(\log \sum_{T\in\mathcal{T}}\prod_{uv\in T}Q_{uv}(\xi)\right)&=\frac{1}{(\sum_{i\in C_j}\pi_i )^2}\sum_{uv\in C_j}\pi_{u}\pi_{v}\Gamma_{uv}(\xi)\\
		\intertext{such that we obtain}
		\nabla_{\xi}\left(\log \sum_{T\in\mathcal{T}}\prod_{uv\in T}Q_{uv}(\xi)\right)&=R^{-1}S L \vec{\Gamma}(\xi).
	\end{align*}
	The Karush-Kuhn-Tucker conditions are
	\begin{align}
		-R^{-1}S L \vec{\Gamma}(\xi)  -\B+\mu \mathbf{1}&=0,\label{eq:KKT1}\\
		\xi&\ge 0,\label{eq:KKT2}\\
		\langle \xi, \mathbf{1}\rangle&=1,\label{eq:KKT3}\\
		\langle \B, \xi\rangle &=0,\label{eq:KKT4}\\
		\B&\ge0.\label{eq:KKT5}
	\end{align}
	The first condition \eqref{eq:KKT1}, $B =-R^{-1}S L \vec{\Gamma}(\xi)  +\mu \mathbf{1}$ together with the dual feasibility $ \B\ge 0 $, yields $ R^{-1}S L \vec{\Gamma}(\xi)\le \mu \mathbf{1} $.
	We therefore refer to all $\Gamma(\xi)\in \mathcal{C}^d$ satisfying $R^{-1}S L \vec{\Gamma} \le\mu \mathbf{1}$ as dually feasible points.
	As a consequence, evaluating the Lagrangian in the optimal point gives
	\begin{align*}
		&-\log \sum_{T\in\mathcal{T}}\prod_{uv\in T}Q_{uv}(\xi)- \langle -R^{-1}S L \vec{\Gamma}(\xi)  +\mu \mathbf{1},\xi \rangle +\langle \mu \mathbf{1},\xi-\frac{1}{\binom{m}{k}}\mathbf{1}\rangle\\
		&=-\log \sum_{T\in\mathcal{T}}\prod_{uv\in T}Q_{uv}(\xi)+\langle \vec{\Gamma}(\xi),\vec{\Q}(\xi)\rangle-\mu\\
		&=-\log \sum_{T\in\mathcal{T}}\prod_{uv\in T}Q_{uv}(\xi)+(m-1)-\mu.
	\end{align*}
	We further obtain from the fourth condition \eqref{eq:KKT4} that in the case of optimality
	\begin{align*}
		\langle \B,\xi \rangle&= (m-1)-\mu \langle \xi, \mathbf{1}\rangle=(m-1)-\mu=0,
	\end{align*}
	such that $ \mu=m-1 $. This implies that the dual objective function is
	\[	-\log \sum_{T\in\mathcal{T}}\prod_{uv\in T}Q_{uv}(\xi)=-\log\det(M^{(m)}(\xi))=\log\det(\Sigma^{(m)}(\xi)). \]
	The Cayley--Menger determinant
	\[\det(\Sigma^{(m)}(\xi))=\det\begin{pmatrix}
		0&-\mathbf{1}^T\\
		\mathbf{1}&-\frac{\Gamma(\xi)}{2}\\
	\end{pmatrix}\]
	gives the proposition.
\end{proof}

\section{Connections of Theorem~\ref{thm:KKT-conditions} with the Kiefer--Wolfowitz theorem}
Following \citet{silvey1980optimal}, the directional derivatives of $ \log\det(M^{(m)}(\xi,\pi_0)) $ towards $ \log\det(M^{(m)}(C_j,\pi_0)) $ are given for each $ 1\le j\le \binom{m}{k} $ by
\begin{align}
	\langle M^{(m)}(C_j,\pi_0),M^{(m)}(\xi,\pi_0)^{-1}\rangle - (m-1).\label{eq:dir_der}
\end{align}
Via the Farris transform, \eqref{eq:dir_der} equals
\begin{align}
	\dla Q(C_j,\pi),\Gamma(\xi,\pi) \dra-(m-1),\label{eq:dir_der_Gamma-M-space}
\end{align}
where $ Q(C_j,\pi) $ are the edge weights of the graph with Laplacian matrix $ M(C_j,\pi) $.
The directional derivatives connect to $ D $-optimality for discrete choice designs via the following application of the celebrated Kiefer--Wolfowitz equivalence theorem (see for example \citet{silvey1980optimal}):	
\begin{theorem}\label{t:silvey}
	A discrete choice design $ \xi^* $ is locally $ D $-optimal in $ \pi $ if and only if 
	\begin{align}\label{eq:optimalityregions}
		\dla Q(C_j,\pi),\Gamma(\xi^*,\pi) \dra&\le m-1
	\end{align}
	for all $ 1\le j\le \binom{m}{k} $.
\end{theorem}

According to Lemma~\ref{lem:Q_in_xi}, we have $ \vec{Q}(C_j,\pi)=LS^{T}R^{-1}e_j $, i.e.~$ \vec{Q}(C_j,\pi) $ equals the $ j $-th column of $ LS^{T}R^{-1} $.
This means that the left hand sides of \eqref{eq:optimalityregions} are jointly expressed by
\[(LS^{T}R^{-1})^T \vec{\Gamma}(\xi^*,\pi)=R^{-1}S L \vec{\Gamma}(\xi^*,\pi), \]
which shows the equivalence of \eqref{eq:optimalityregions} and Theorem~\ref{thm:KKT-conditions}(ii).
According to {\citet[Corollary~3.10]{silvey1980optimal}},
for choice sets $C_j$ with positive weight in a locally $ D $-optimal design $ \xi^{*} $, the inequalities \eqref{eq:optimalityregions} in Theorem~\ref{t:silvey} hold with equality.
This is equivalent to Theorem~\ref{thm:KKT-conditions}(iii), while  Theorem~\ref{thm:KKT-conditions}(i) ensures that $ \xi^* $ is a design.

\section{Example for the application of Theorem~\ref{thm:KKT-conditions}}\label{appendix:example}

\begin{example}\label{ex:smallest}
	Let $ m=6 $ and $ k=3 $. The edge-hyperedge incidence matrix equals
	\[\arraycolsep=5pt\def\arraystretch{0.5} S^T=\begin{pNiceArray}[
		first-row,code-for-first-row=\scriptstyle,
		last-col,code-for-last-col=\scriptstyle,
		]{cccccccccccccccccccc}
		1&2&3&4&5&6&7&8&9&10&11&12&13&14&15&16&17&18&19&20&\\
		1&1&1&1&0&0&0&0&0&0&0&0&0&0&0&0&0&0&0&0&(1,2)\\
		1&0&0&0&1&1&1&0&0&0&0&0&0&0&0&0&0&0&0&0&(1,3)\\
		0&1&0&0&1&0&0&1&1&0&0&0&0&0&0&0&0&0&0&0&(1,4)\\
		0&0&1&0&0&1&0&1&0&1&0&0&0&0&0&0&0&0&0&0&(1,5)\\
		0&0&0&1&0&0&1&0&1&1&0&0&0&0&0&0&0&0&0&0&(1,6)\\
		1&0&0&0&0&0&0&0&0&0&1&1&1&0&0&0&0&0&0&0&(2,3)\\
		0&1&0&0&0&0&0&0&0&0&1&0&0&1&1&0&0&0&0&0&(2,4)\\
		0&0&1&0&0&0&0&0&0&0&0&1&0&1&0&1&0&0&0&0&(2,5)\\
		0&0&0&1&0&0&0&0&0&0&0&0&1&0&1&1&0&0&0&0&(2,6)\\
		0&0&0&0&1&0&0&0&0&0&1&0&0&0&0&0&1&1&0&0&(3,4)\\
		0&0&0&0&0&1&0&0&0&0&0&1&0&0&0&0&1&0&1&0&(3,5)\\
		0&0&0&0&0&0&1&0&0&0&0&0&1&0&0&0&0&1&1&0&(3,6)\\
		0&0&0&0&0&0&0&1&0&0&0&0&0&1&0&0&1&0&0&1&(4,5)\\
		0&0&0&0&0&0&0&0&1&0&0&0&0&0&1&0&0&1&0&1&(4,6)\\
		0&0&0&0&0&0&0&0&0&1&0&0&0&0&0&1&0&0&1&1&(5,6)\\
	\end{pNiceArray}\;.\]
	Here the choice sets $C_{1},\dots, C_{\binom{6}{3}}$ are ordered lexicographically, starting with $\{1,2,3\}$ and going to $C_{20} = \{4,5,6\}$.  The entries are simple indicators of the inclusion of an edge in a choice set.  For example, the choice set $ C_1=\{1,2,3\} $ allows for the edges $ (1,2),(1,3),(2,3) $, which is encoded in the first column of $ S^T $.
	For an arbitrary choice set $ C_j=\{s,t,u\} $, we obtain the following inequality from Theorem~\ref{thm:KKT-conditions}~(ii) to describe $D$-optimality:
	\[
	\frac{\pi_s\pi_t}{(\pi_s+\pi_t+\pi_u)^2}\Gamma_{st}(\xi^*)+\frac{\pi_s\pi_u}{(\pi_s+\pi_t+\pi_u)^2}\Gamma_{su}(\xi^*)+\frac{\pi_t\pi_u}{(\pi_s+\pi_t+\pi_u)^2}\Gamma_{tu}(\xi^*)\le (m-1).
	\]
	By Theorem~\ref{thm:KKT-conditions}~(iii), equality holds in the inequality if and only if $w_j^*=0$.
	Note that the inequalities and equations are linear in $ \Gamma(\xi^*) $.
\end{example}

\section{Proof of Theorem~\ref{thm:decomposable}}
\begin{proof}
	When $ w_{uv}^*>0 $, it is $ \Gamma_{uv}(\xi^*,\pi)=\frac{m-1}{\lambda_{uv}} $
	according to Corollary~\ref{cor:KKT-conditions} in the case of $ D $-optimality.
	After reordering the alternatives according to the cliques $ K_1,K_2,\ldots $ with separator sets $ D_{12},D_{13},\ldots $ we have that each submatrix $ \Gamma_{K_u\cup K_v}(\xi^*,\pi) $ is of the form
	\begin{align*}
		\Gamma_{K_u\cup K_v}(\xi^*,\pi)&=\begin{pmatrix}
			\Gamma_{K_u\setminus K_v}&\Gamma_{K_u\setminus K_v,D_{uv}}&*\\
			\Gamma_{D_{uv},K_u\setminus K_v}&\Gamma_{D_{uv}}&\Gamma_{D_{uv},K_v\setminus K_u}\\
			*&\Gamma_{K_v\setminus K_u,D_{uv}}&\Gamma_{K_v\setminus K_u}\\
		\end{pmatrix},
	\end{align*} 
	with $ * $ denoting the unknown entries. \citet{Hentschel2021}, showed via positive definite matrix completion \citep[Theorem 5.3]{KNS97} that $ \Gamma(\xi^*,\pi) $ is uniquely recovered as the conditionally negative definite completion such that $ M(\xi^*,\pi) $ is the Laplacian of the underlying decomposable graph with edge weights $ w^*_{uv}\lambda_{uv} $.
	As a consequence $ \Gamma(\xi^*,\pi) $ is a rational function in $ \pi $.
	The design weights are then obtained from $ \Gamma(\xi^*,\pi) $, such that $ w_{uv}^*=-\frac{M(\xi^*,\pi)_{uv}}{\lambda_{uv}} $.
\end{proof}

\vspace*{-10pt}

\section{Solutions for the Bradley--Terry paired comparison model}\label{appendix:examples}
In this section we discuss decomposable examples for the Bradley--Terry paired comparison model. We begin with a general discussion of saturated designs for an arbitrary number of alternatives, compare \citet{KRS2021}.
For three and four alternatives, \citet{Grasshoff2008,KRS2021} found explicit formulas, expressing the $D$-optimal design weights in terms of the parameters, thereby solving the optimal design problem.
We first express these solutions much more efficiently in our new framework.
Then in Section~\ref{sec:five_alternatives}, we give previously unknown solutions for five alternatives.
\subsection{Saturated Designs}

A saturated design is a design with minimal support with nonsingular reduced information matrix. For the Bradley--Terry paired comparison model with $ m $ alternatives, saturated designs are supported on $ m-1 $ design points such that the design is supported on a tree.
The information matrix is the Laplacian matrix of a tree with edge weights $ \lambda_{uv}w_{uv} $. 
Designs supported on trees have a particularly simple structure in the completion of $ \Gamma(\xi^*,\pi) $, as it corresponds to a tree metric. This means that the missing entries compute for all $ 1\le u < v \le m $ as
\begin{align*}
	\Gamma_{uv}(\xi^*,\pi)=\sum_{st\in\text{ph}(u,v)}\Gamma_{st}(\xi^*,\pi),
\end{align*}
where $ \text{ph}(u,v) $ is the unique path between $ u $ and $ v $.
\citet{KRS2021} showed that $ D $-optimal saturated designs always correspond to paths, i.e.\ trees where all nodes have degree at most two.
\citet{KRS2021} further shows that all saturated designs can be recovered via permutations of the standard path  $ T=1-2-3-\ldots-m $.
For the design supported on the standard path, it follows that $ \Gamma_{uv}(\xi^*,\pi)=\frac{m-1}{\lambda_{uv}} $ for $ uv\in E(T) :=\{12,23,\ldots(m-1)m\} $.
It follows from \citet[Cor.~2.5]{KNS97}, that on trees it holds that
\begin{align*}
	\Sigma_{uv}(\xi^*,\pi)&=-\sum_{uv \in \text{ph}(m,\text{lca}(u,v))}\frac{1}{M_{uv}(\xi^*,\pi)},\\
	\Sigma_{uu}(\xi^*,\pi)&=-\sum_{uv \in \text{ph}(m,u)}\frac{1}{M_{uv}(\xi^*,\pi)}.
\end{align*}
Here, $ \text{ph}(m,\text{lca}(u,v)) $ denotes the path from $ m $ to the last common ancestor of $ u $ and $ v $ or, if $ u $ is a descendant of $ v $, the path from $ m $ to $ v $.

Now, as $ \Gamma_{uv}(\xi^*,\pi)= \Sigma_{uu}(\xi^*,\pi)+\Sigma_{vv}(\xi^*,\pi)-2\Sigma_{uv}(\xi^*,\pi) $, it holds that for $ uv\in E(T) $ we have $ \Gamma_{uv}(\xi^*,\pi)=-\frac{1}{M_{uv}(\xi^*,\pi)} $.
Therefore for $ uv\in E(T) $, we recover the well known fact that for saturated designs the nonzero design weights are all equal:
\[w_{uv}=-\frac{M_{uv}(\xi^*,\pi)}{\lambda_{uv}}=\frac{1}{\Gamma_{uv}(\xi^*,\pi) \lambda_{uv}}=\frac{1}{m-1}.\]
By Corollary~\ref{cor:KKT-conditions}, this design is $ D $-optimal if and only if $ \Gamma_{uv}(\xi^*,\pi)\le\frac{m-1}{\lambda_{uv}} $ for all $ uv\not\in E(T) $.

\subsection{Three alternatives}
\begin{example}\label{ex:m=3}
	Let $ m=3 $. Then, the $ 3\times 3 $ information matrix of a design $ \xi $ equals
	\[M(\xi,\pi)=\begin{pmatrix}
		\lambda_{12} w_{12}+\lambda_{13} w_{13} & -\lambda_{12} w_{12} &-\lambda_{13} w_{13} \\
		-\lambda_{12} w_{12} & \lambda_{12} w_{12}+\lambda_{23} w_{23} & -\lambda_{23} w_{23} \\
		-\lambda_{13} w_{13}&-\lambda_{23} w_{23}&\lambda_{13} w_{13}+\lambda_{23} w_{23}\\
	\end{pmatrix}.
	\]
	By Corollary~\ref{cor:KKT-conditions}, a design $ \xi^*=(w_{12}^*,w_{13}^*,w_{23}^*) $ with $ w_{uv}^*>0 $ for all $ 1\le u<v\le 3 $ is $ D $-optimal when
	\[\Gamma(\xi^*,\pi)=\begin{pmatrix}
		0&\frac{2}{\lambda_{12}}&\frac{2}{\lambda_{13}}\\
		\frac{2}{\lambda_{12}}&0&\frac{2}{\lambda_{23}}\\
		\frac{2}{\lambda_{13}}&\frac{2}{\lambda_{23}}&0\\
	\end{pmatrix}.\]
	Via the inverse Farris transform \eqref{eq:invFarris}, we compute
	\begin{align*}
		&\Sigma^{(3)}(\xi^*,\pi_0)=
		\begin{pmatrix}
			\frac{2}{\lambda_{13}} & -\frac{1}{\lambda_{12}}+\frac{1}{\lambda_{13}}+\frac{1}{\lambda_{23}} \\
			-\frac{1}{\lambda_{12}}+\frac{1}{\lambda_{13}}+\frac{1}{\lambda_{23}} & \frac{2}{\lambda_{23}} \\
		\end{pmatrix}.
	\end{align*}
	Inverting $ \Sigma^{(3)}(\xi^*,\pi_0) $ gives the  $ 2\times 2 $ information matrix
	\begin{align*}
		&M^{(3)}(\xi^*,\pi_0)=\\
		&\begin{pmatrix}
			-\frac{2 \lambda_{12}^2 \lambda_{13}^2 \lambda_{23}}{\lambda_{12}^2 (\lambda_{13}-\lambda_{23})^2-2 \lambda_{12} \lambda_{13} \lambda_{23} (\lambda_{13}+\lambda_{23})+\lambda_{13}^2 \lambda_{23}^2} & \frac{\lambda_{12} \lambda_{13} \lambda_{23} (\lambda_{12} (\lambda_{13}+\lambda_{23})-\lambda_{13} \lambda_{23})}{\lambda_{12}^2 (\lambda_{13}-\lambda_{23})^2-2 \lambda_{12} \lambda_{13} \lambda_{23} (\lambda_{13}+\lambda_{23})+\lambda_{13}^2 \lambda_{23}^2} \\
			\frac{\lambda_{12} \lambda_{13} \lambda_{23} (\lambda_{12} (\lambda_{13}+\lambda_{23})-\lambda_{13} \lambda_{23})}{\lambda_{12}^2 (\lambda_{13}-\lambda_{23})^2-2 \lambda_{12} \lambda_{13} \lambda_{23} (\lambda_{13}+\lambda_{23})+\lambda_{13}^2 \lambda_{23}^2} & -\frac{2 \lambda_{12}^2 \lambda_{13} \lambda_{23}^2}{\lambda_{12}^2 (\lambda_{13}-\lambda_{23})^2-2 \lambda_{12} \lambda_{13} \lambda_{23} (\lambda_{13}+\lambda_{23})+\lambda_{13}^2 \lambda_{23}^2} \\
		\end{pmatrix}.
	\end{align*}
	We obtain from \eqref{eq:gen_information} that $ M(\xi^*,\pi) $ is given by
	\begin{align*}
		M_{12}(\xi^*,\pi)&=M^{(3)}_{12}(\xi^*,\pi_0),\\
		M_{13}(\xi^*,\pi)&=-M^{(3)}_{11}(\xi^*,\pi_0)-M^{(3)}_{12}(\xi^*,\pi_0),\\
		M_{23}(\xi^*,\pi)&=-M^{(3)}_{22}(\xi^*,\pi_0)-M^{(3)}_{12}(\xi^*,\pi_0)
	\end{align*}
	and therefore
	\begin{align*}
		w_{12}^*&=-\frac{M_{12}(\xi^*,\pi)}{\lambda_{12}}=\frac{\lambda_{13} \lambda_{23} (-\lambda_{12} \lambda_{13}-\lambda_{12} \lambda_{23}+\lambda_{13} \lambda_{23})}{\lambda_{12}^2 \lambda_{13}^2-2 \lambda_{12}^2 \lambda_{13} \lambda_{23}+\lambda_{12}^2 \lambda_{23}^2-2 \lambda_{12} \lambda_{13}^2 \lambda_{23}-2 \lambda_{12} \lambda_{13} \lambda_{23}^2+\lambda_{13}^2 \lambda_{23}^2},\\
		w_{13}^*&=-\frac{M_{13}(\xi^*,\pi)}{\lambda_{13}}=\frac{\lambda_{12} \lambda_{23} (-\lambda_{12} \lambda_{13}+\lambda_{12} \lambda_{23}-\lambda_{13} \lambda_{23})}{\lambda_{12}^2 \lambda_{13}^2-2 \lambda_{12}^2 \lambda_{13} \lambda_{23}+\lambda_{12}^2 \lambda_{23}^2-2 \lambda_{12} \lambda_{13}^2 \lambda_{23}-2 \lambda_{12} \lambda_{13} \lambda_{23}^2+\lambda_{13}^2 \lambda_{23}^2},\\
		w_{23}^*&=-\frac{M_{23}(\xi^*,\pi)}{\lambda_{23}}=\frac{\lambda_{12} \lambda_{13} (\lambda_{12} \lambda_{13}-\lambda_{12} \lambda_{23}-\lambda_{13} \lambda_{23})}{\lambda_{12}^2 \lambda_{13}^2-2 \lambda_{12}^2 \lambda_{13} \lambda_{23}+\lambda_{12}^2 \lambda_{23}^2-2 \lambda_{12} \lambda_{13}^2 \lambda_{23}-2 \lambda_{12} \lambda_{13} \lambda_{23}^2+\lambda_{13}^2 \lambda_{23}^2}.\\
	\end{align*}
	By Corollary~\ref{cor:KKT-conditions}, this design is $ D $-optimal if and only if the design weights are non-negative. This allows to find a semi-algebraic description of the region of $ D $-optimality in parameter space where a design supported on all different choice sets is $ D $-optimal.
\end{example}

\subsection{Four alternatives}
We begin with the complete graph, i.e.~designs with full support.
\begin{example}\label{ex:complete_m=4}
	Let $ m=4 $. Then, the reduced information matrix $ M^{(4)}(\xi,\pi_0) $ of a design $ \xi $ that is supported on the complete graph computes as
	\[
	\begin{pmatrix}
		\lambda_{12} w_{12}+\lambda_{13} w_{13}+\lambda_{14} w_{14} & -\lambda_{12} w_{12} & -\lambda_{13} w_{13} \\
		-\lambda_{12} w_{12} & \lambda_{12} w_{12}+\lambda_{23} w_{23}+\lambda_{24} w_{24} & -\lambda_{23} w_{23} \\
		-\lambda_{13} w_{13} & -\lambda_{23} w_{23} & \lambda_{13} w_{13}+\lambda_{23} w_{23}+\lambda_{34} w_{34} \\
	\end{pmatrix}.
	\]
	Now, from Corollary~\ref{cor:KKT-conditions} the fully supported design $ \xi^* $ is $ D $-optimal when  
	\begin{align*}
		\Gamma(\xi^*,\pi)=\begin{pNiceMatrix}
			0&\frac{3}{\lambda_{12}}&\frac{3}{\lambda_{13}}&\frac{3}{\lambda_{14}}\\ 
			\frac{3}{\lambda_{12}}&0&\frac{3}{\lambda_{23}}&\frac{3}{\lambda_{24}}\\
			\frac{3}{\lambda_{13}}&\frac{3}{\lambda_{23}}&0&\frac{3}{\lambda_{34}}\\
			\frac{3}{\lambda_{14}}&\frac{3}{\lambda_{24}}&\frac{3}{\lambda_{34}}&0\\           
		\end{pNiceMatrix},
	\end{align*}
	where $ \lambda_{uv}= \frac{\pi_u \pi_v}{(\pi_u+\pi_v)^{2}} $.
	Via the inverse Farris transform, we find
	\begin{align*}
		&\Sigma^{(4)}(\xi^*,\pi_0)=
		\begin{pmatrix}
			\frac{3}{\lambda_{14}} & \frac{3}{2} \left(-\frac{1}{\lambda_{12}}+\frac{1}{\lambda_{14}}+\frac{1}{\lambda_{24}}\right) & \frac{3}{2} \left(-\frac{1}{\lambda_{13}}+\frac{1}{\lambda_{14}}+\frac{1}{\lambda_{34}}\right) \\
			\frac{3}{2} \left(-\frac{1}{\lambda_{12}}+\frac{1}{\lambda_{14}}+\frac{1}{\lambda_{24}}\right) & \frac{3}{\lambda_{24}} & \frac{3}{2} \left(-\frac{1}{\lambda_{23}}+\frac{1}{\lambda_{24}}+\frac{1}{\lambda_{34}}\right) \\
			\frac{3}{2} \left(-\frac{1}{\lambda_{13}}+\frac{1}{\lambda_{14}}+\frac{1}{\lambda_{34}}\right) & \frac{3}{2} \left(-\frac{1}{\lambda_{23}}+\frac{1}{\lambda_{24}}+\frac{1}{\lambda_{34}}\right) & \frac{3}{\lambda_{34}} \\
		\end{pmatrix}.
	\end{align*}
	We compute the information matrix $ M(\xi^*,\pi) $ from $ \Sigma^{(4)}(\xi^*,\pi_0) $ and its entry in $ uv=12 $:
	\begin{align*}
		&M_{12}(\xi^*,\pi) = \frac{1}{A}(-\lambda_{12} \lambda_{13} \lambda_{14} \lambda_{23} \lambda_{24} (\lambda_{12} (\lambda_{13} \lambda_{14} (\lambda_{23} (\lambda_{24}-\lambda_{34})-\lambda_{24} \lambda_{34})\\
		&+\lambda_{13} \lambda_{34} (\lambda_{23} (\lambda_{34}-\lambda_{24})-\lambda_{24} \lambda_{34})-\lambda_{14} \lambda_{23} \lambda_{34} (\lambda_{24}+\lambda_{34})+\lambda_{14} \lambda_{24} \lambda_{34}^2)+2 \lambda_{13} \lambda_{14} \lambda_{23} \lambda_{24} \lambda_{34}))
	\end{align*}
	with some normalization term $ A $.
	We obtain $ w_{12}=-\frac{M_{12}(\xi^*,\pi)}{\lambda_{12}} $. The design is $ D $-optimal when all weights are positive, i.e.~ when $ \xi^*\in\Delta_{6} $.
\end{example}

\begin{example}
	Let $ m=4 $ and assume a design $ \xi $ supported on the chordal graph in Figure~\ref{fig:chordal_example2}. 
	\begin{figure}[ht]\centering
		\begin{tikzpicture}[line width=.6mm]
			\node[minimum size=8mm,shape=circle,draw=black] (1) at (-1,1) {1};
			\node[minimum size=8mm,shape=circle,draw=black] (2) at (1,1) {2};
			\node[minimum size=8mm,shape=circle,draw=black] (3) at (-1,-1) {3};
			\node[minimum size=8mm,shape=circle,draw=black] (4) at (1,-1) {4};
			\path (1) edge (3);
			\path (1) edge (4);
			\path (2) edge (3);
			\path (2) edge (4);
			\path (3) edge (4);
		\end{tikzpicture}
		\caption{Example of a chordal graph on 4 alternatives with 5 edges}\label{fig:chordal_example2}
	\end{figure}
	Then, the reduced information matrix of such a design computes as
	\[
	M^{(4)}(\xi,\pi_0) =\begin{pmatrix}
		\lambda_{13} w_{13}+\lambda_{14} w_{14} & 0 & -\lambda_{13} w_{13} \\
		0 & \lambda_{23} w_{23}+\lambda_{24} w_{24} & -\lambda_{23} w_{23} \\
		-\lambda_{13} w_{13} & -\lambda_{23} w_{23} & \lambda_{13} w_{13}+\lambda_{23} w_{23}+\lambda_{34} w_{34} \\
	\end{pmatrix}.
	\]
	We obtain from Corollary~\ref{cor:KKT-conditions} that
	\begin{align*}
		\Gamma(\xi^*,\pi)&=\begin{pNiceMatrix}
			0&\Gamma_{12}&\frac{3}{\lambda_{13}}&\frac{3}{\lambda_{14}}\\
			\Gamma_{12}&0&\frac{3}{\lambda_{23}}&\frac{3}{\lambda_{24}}\\
			\frac{3}{\lambda_{13}}&\frac{3}{\lambda_{23}}&0&\frac{3}{\lambda_{34}}\\
			\frac{3}{\lambda_{14}}&\frac{3}{\lambda_{24}}&\frac{3}{\lambda_{34}}&0\\
		\end{pNiceMatrix},
	\end{align*}
	where $ \Gamma_{12} $ denotes the unknown entry in $ \Gamma(\xi^*,\pi) $.
	Via the Farris transform, we find
	\begin{align*}
		\Sigma^{(4)}(\xi^{*},\pi_0)=\begin{pNiceMatrix}
			\frac{3}{\lambda_{14}} & \sigma_{12} & \frac{3 (\lambda_{13} \lambda_{14}+\lambda_{13} \lambda_{34}-\lambda_{14} \lambda_{34})}{2 \lambda_{13} \lambda_{14} \lambda_{34}} \\
			\sigma_{12} & \frac{3}{\lambda_{24}} & \frac{3 (\lambda_{23} \lambda_{24}+\lambda_{23} \lambda_{34}-\lambda_{24} \lambda_{34})}{2 \lambda_{23} \lambda_{24} \lambda_{34}} \\
			\frac{3 (\lambda_{13} \lambda_{14}+\lambda_{13} \lambda_{34}-\lambda_{14} \lambda_{34})}{2 \lambda_{13} \lambda_{14} \lambda_{34}} & \frac{3 (\lambda_{23} \lambda_{24}+\lambda_{23} \lambda_{34}-\lambda_{24} \lambda_{34})}{2 \lambda_{23} \lambda_{24} \lambda_{34}} & \frac{3}{\lambda_{34}} \\
		\end{pNiceMatrix},
	\end{align*}
	where $ \sigma_{12}=\frac{1}{2}(\frac{3}{\lambda_{14}}+\frac{3}{\lambda_{24}}-\Gamma_{12}) $.
	According to \citet[Theorem~2.2.3]{BW2011}, there is only one positive definite solution for $ \sigma_{12} $ such that $ \left(\Sigma^{(4)}(\xi^{*},\pi_0)\right)^{-1}_{12}=0 $, namely
	\begin{align*}
		\sigma_{12}&=\frac{3 (\lambda_{13} \lambda_{14}+\lambda_{13} \lambda_{34}-\lambda_{14} \lambda_{34})}{2 \lambda_{13} \lambda_{14} \lambda_{34}} \frac{\lambda_{34}}{3}\frac{3 (\lambda_{23} \lambda_{24}+\lambda_{23} \lambda_{34}-\lambda_{24} \lambda_{34})}{2 \lambda_{23} \lambda_{24} \lambda_{34}}\\
		&=\frac{3 (-\lambda_{14} \lambda_{34}+\lambda_{13}(\lambda_{14}+\lambda_{34})) (-\lambda_{24}\lambda_{34}+\lambda_{23}(\lambda_{24}+\lambda_{34}))}{4 \lambda_{13}\lambda_{14}\lambda_{23}\lambda_{24}\lambda_{34}}.
	\end{align*}
	Via the inverse of $ \Sigma^{(4)}(\xi^{*},\pi_0) $ we obtain the $ D $-optimal information matrix $ M(\xi^*,\pi) $ where the entries are rational functions of $ \lambda_{uv} $ and hence the design weights $ w_{uv}^*=-\frac{M_{uv}(\xi^*,\pi)}{\lambda_{uv}} $.
	The design $ \xi^* $ is $ D $-optimal when $ \Gamma_{12}(\xi^*,\pi)\le \frac{3}{\lambda_{12}} $ and when $ \xi^*\in\Delta_{6} $.
\end{example}

\begin{example}
	Let $ m=4 $ and assume a design $ \xi $ supported on the chordal graph in Figure~\ref{fig:chordal_example}.
	\begin{figure}[ht]\centering
		\begin{tikzpicture}[line width=.6mm]
			\node[minimum size=8mm,shape=circle,draw=black] (1) at (-1,1) {1};
			\node[minimum size=8mm,shape=circle,draw=black] (2) at (1,1) {2};
			\node[minimum size=8mm,shape=circle,draw=black] (3) at (-1,-1) {3};
			\node[minimum size=8mm,shape=circle,draw=black] (4) at (1,-1) {4};
			\path (1) edge (4);
			\path (2) edge (3);
			\path (2) edge (4);
			\path (3) edge (4);
		\end{tikzpicture}
		\caption{Example of a chordal graph on 4 alternatives with 4 edges}\label{fig:chordal_example}
	\end{figure}
	Then, the reduced information matrix $ M^{(4)}(\xi,\pi_0) $ computes as
	\[
	M^{(4)}(\xi,\pi_0)=\begin{pmatrix}
		\lambda_{14} w_{14} & 0 & 0 \\
		0 & \lambda_{23} w_{23}+\lambda_{24} w_{24} & -\lambda_{23} w_{23} \\
		0 & -\lambda_{23} w_{23} & \lambda_{23} w_{23}+\lambda_{34} w_{34} \\
	\end{pmatrix}.
	\]
	We obtain from Corollary~\ref{cor:KKT-conditions} that
	\begin{align*}
		\Gamma(\xi^*,\pi)&=\begin{pNiceMatrix}
			0&\Gamma_{12}&\Gamma_{13}&\frac{3}{\lambda_{14}}\\
			\Gamma_{12}&0&\frac{3}{\lambda_{23}}&\frac{3}{\lambda_{24}}\\
			\Gamma_{13}&\frac{3}{\lambda_{23}}&0&\frac{3}{\lambda_{34}}\\
			\frac{3}{\lambda_{14}}&\frac{3}{\lambda_{24}}&\frac{3}{\lambda_{34}}&0\\
		\end{pNiceMatrix},
	\end{align*}
	where $ \Gamma_{12},\Gamma_{13} $ denote the unknown entries of $ \Gamma(\xi^*,\pi) $.
	Via the Farris transform we obtain
	\begin{align*}
		\Sigma^{(4)}(\xi^{*},\pi_0)=\begin{pNiceMatrix}
			\frac{3}{\lambda_{14}} &\sigma_{12}  &\sigma_{13}  \\
			\sigma_{12} & \frac{3}{\lambda_{24}} & \frac{3 (\lambda_{23} \lambda_{24}+\lambda_{23} \lambda_{34}-\lambda_{24} \lambda_{34})}{2 \lambda_{23} \lambda_{24} \lambda_{34}} \\
			\sigma_{13} & \frac{3 (\lambda_{23} \lambda_{24}+\lambda_{23} \lambda_{34}-\lambda_{24} \lambda_{34})}{2 \lambda_{23} \lambda_{24} \lambda_{34}} & \frac{3}{\lambda_{34}} \\
		\end{pNiceMatrix},
	\end{align*}
	where $ \sigma_{12}=\frac{1}{2}(\frac{3}{\lambda_{14}}+\frac{3}{\lambda_{24}}-\Gamma_{12}) $ and $ \sigma_{13}=\frac{1}{2}(\frac{3}{\lambda_{14}}+\frac{3}{\lambda_{34}}-\Gamma_{13}) $.
	From the block structure of the information matrix we immediately obtain that the positive definite matrix completion is $ \sigma_{12}=\sigma_{13}=0 $.
	Via the inverse of $ \Sigma^{(4)}(\xi^{*},\pi_0) $ we obtain the $ D $-optimal information matrix $ M(\xi^*,\pi) $ where the entries are rational functions of $ \lambda_{uv} $ and hence the design weights $ w_{uv}^*=-\frac{M_{uv}(\xi^*,\pi)}{\lambda_{uv}} $.
	The design $ \xi^* $ is $ D $-optimal when 
	\begin{align*}
		\Gamma_{12}(\xi^*,\pi)&=\frac{3}{\lambda_{14}}+\frac{3}{\lambda_{24}}\le \frac{3}{\lambda_{12}},\\
		\Gamma_{13}(\xi^*,\pi)&=\frac{3}{\lambda_{14}}+\frac{3}{\lambda_{34}}\le \frac{3}{\lambda_{13}},
	\end{align*}
	and $ \xi^*\in\Delta_{6} $.
\end{example}

\subsection{Five alternatives}\label{sec:five_alternatives}
We only discuss the complete graph for the setting with five alternatives.
\begin{example}
	Similarly to Example~\ref{ex:complete_m=4}, we obtain from Corollary~\ref{cor:KKT-conditions} that $ D $-optimality of a fully-supported design holds when
	\begin{align*}
		\Gamma(\xi^*,\pi)=\begin{pNiceMatrix}
			0&\frac{4}{\lambda_{12}}&\frac{4}{\lambda_{13}}&\frac{4}{\lambda_{14}}&\frac{4}{\lambda_{15}}\\ 
			\frac{4}{\lambda_{12}}&0&\frac{4}{\lambda_{23}}&\frac{4}{\lambda_{24}}&\frac{4}{\lambda_{25}}\\
			\frac{4}{\lambda_{13}}&\frac{4}{\lambda_{23}}&0&\frac{4}{\lambda_{34}}&\frac{4}{\lambda_{35}}\\
			\frac{4}{\lambda_{14}}&\frac{4}{\lambda_{24}}&\frac{4}{\lambda_{34}}&0&\frac{4}{\lambda_{45}}\\    
			\frac{4}{\lambda_{15}}&\frac{4}{\lambda_{25}}&\frac{4}{\lambda_{35}}&\frac{4}{\lambda_{45}}&0\\       
		\end{pNiceMatrix},
	\end{align*}
	where  $ \lambda_{uv}= \frac{\pi_u \pi_v}{(\pi_u+\pi_v)^{2}} $.
	We obtain the $ D $-optimal information matrix as a rational function in $ \lambda_{uv},\;1\le u<v\le 5 $, such that  $ w_{uv}^*=-\frac{M_{uv}(\xi^*,\pi)}{\lambda_{uv}} $.
	The design is $ D $-optimal if and only if $ \xi^*\in\Delta_{10} $.
	The full computation is available in a \texttt{Mathematica} notebook.
\end{example}

\begin{example}
	Let $ m=4 $ and assume a design $ \xi $ supported on the chordal graph in Figure~\ref{fig:butterfly}.
	\begin{figure}[ht]\centering
		\begin{tikzpicture}[line width=.6mm]
			\node[minimum size=8mm,shape=circle,draw=black] (1) at (-2,1) {1};
			\node[minimum size=8mm,shape=circle,draw=black] (3) at (2,1) {3};
			\node[minimum size=8mm,shape=circle,draw=black] (2) at (-2,-1) {2};
			\node[minimum size=8mm,shape=circle,draw=black] (4) at (2,-1) {4};
			\node[minimum size=8mm,shape=circle,draw=black] (5) at (0,0) {5};
			\path (1) edge (2);
			\path (1) edge (5);
			\path (2) edge (5);
			\path (3) edge (4);
			\path (3) edge (5);
			\path (4) edge (5);
		\end{tikzpicture}
		\caption{Example of a block graph on 5 alternatives with 6 edges}\label{fig:butterfly}
	\end{figure}
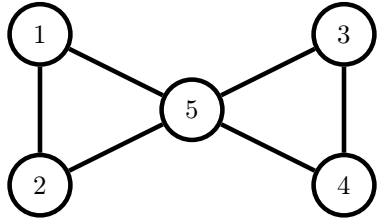
	Then, the reduced information matrix $ M^{(5)}(\xi,\pi_0) $ computes as
	\[
	M^{(5)}(\xi,\pi_0)=\begin{pmatrix}
		\lambda_{15} w_{15}+\lambda_{12} w_{12} & -\lambda_{12} w_{12} & 0&0 \\
		-\lambda_{12} w_{12} & \lambda_{12} w_{12}+\lambda_{25} w_{25} & 0&0 \\
		0 & 0 & \lambda_{34} w_{34}+\lambda_{35} w_{35}&-\lambda_{34} w_{34} \\
		0 & 0 & \lambda_{34} w_{34}&\lambda_{34} w_{34}+\lambda_{45} w_{45} \\
	\end{pmatrix}.
	\]
	We obtain from Corollary~\ref{cor:KKT-conditions} that
	\begin{align*}
		\Gamma(\xi^*,\pi)=\begin{pNiceMatrix}
			0&\frac{4}{\lambda_{12}}&\Gamma_{13}&\Gamma_{14}&\frac{4}{\lambda_{15}}\\ 
			\frac{4}{\lambda_{12}}&0&\Gamma_{23}&\Gamma_{24}&\frac{4}{\lambda_{25}}\\
			\Gamma_{13}&\Gamma_{23}&0&\frac{4}{\lambda_{34}}&\frac{4}{\lambda_{35}}\\
			\Gamma_{14}&\Gamma_{24}&\frac{4}{\lambda_{34}}&0&\frac{4}{\lambda_{45}}\\    
			\frac{4}{\lambda_{15}}&\frac{4}{\lambda_{25}}&\frac{4}{\lambda_{35}}&\frac{4}{\lambda_{45}}&0\\       
		\end{pNiceMatrix},
	\end{align*}
	From the simple block structure of $ \Sigma^{(5)}(\xi^*,\pi_0) $ we obtain the $ D $-optimal information matrix $ M(\xi^*,\pi) $ where the entries are rational functions of $ \lambda_{uv} $ and hence the design weights $ w_{uv}^*=-\frac{M_{uv}(\xi^*,\pi)}{\lambda_{uv}} $.
	The design $ \xi^* $ is $ D $-optimal when 
	\begin{align*}
		\Gamma_{13}&=\frac{4}{\lambda_{15}}+\frac{4}{\lambda_{35}}\le\frac{4}{\lambda_{13}},\\
		\Gamma_{14}&=\frac{4}{\lambda_{15}}+\frac{4}{\lambda_{45}}\le\frac{4}{\lambda_{14}},\\
		\Gamma_{23}&=\frac{4}{\lambda_{25}}+\frac{4}{\lambda_{35}}\le\frac{4}{\lambda_{23}},\\
		\Gamma_{24}&=\frac{4}{\lambda_{25}}+\frac{4}{\lambda_{45}}\le\frac{4}{\lambda_{24}}.
	\end{align*}
	
\end{example}

\end{document}